\newcommand{\aligneqn}[2]{
	\mathmakebox[\widthof{#1}][c]{#2}
}
\newtheorem{thm}{Theorem}
\newtheorem{lem}{Lemma}
\newtheorem{cor}{Corollary}
\theoremstyle{remark}
\newtheorem{rem}{Remark}
\def\statement{\begin{minipage}[t]{.75\textwidth}
       NOTICE: this is the author's version of a work that was accepted for publication in Journal of Non--Newtonian Fluid Mechanics. Changes resulting from the
       publishing process, such as peer review, editing, corrections, structural formatting, and other quality control mechanisms may not be reflected in this
       document. Changes may have been made to this work since it was submitted for publication. A definitive version was subsequently published in Journal of
       Non--Newtonian Fluid Mechanics, [214, (2014)]  \href{http://dx.doi.org/10.1016/j.jnnfm.2014.09.018}{DOI:10.1016/j.jnnfm.2014.09.018}
       \end{minipage}}
\def\ps@pprintTitle{%
     \let\@oddhead\@empty
     \let\@evenhead\@empty
     \def\@oddfoot{\footnotesize\itshape
       \statement\hfill\today}%
     \let\@evenfoot\@oddfoot}
\journal{Journal of Non-Newtonian Fluid Mechanics}
\begin{document}
\aicescoverpage
\begin{frontmatter}

\title{Fully-implicit log-conformation formulation of constitutive laws}
\author{Philipp Knechtges\corref{cor1}}
\ead{knechtges@cats.rwth-aachen.de}
\author{Marek Behr\corref{}}
\ead{behr@cats.rwth-aachen.de}
\author{Stefanie Elgeti\corref{}}
\ead{elgeti@cats.rwth-aachen.de}

\cortext[cor1]{Corresponding author}

\address{Chair for Computational Analysis of Technical Systems (CATS), RWTH Aachen University, 52056 Aachen, Germany\\
Center for Computational Engineering Science (CCES), RWTH Aachen University, 52056 Aachen, Germany}

\begin{abstract}
Subject of this paper is the derivation of a new constitutive law in terms of the logarithm of the conformation tensor
that can be used as a full substitute for the 2D governing equations of the Oldroyd-B, Giesekus and other models.
One of the key features of these new equations is that -- in contrast to the original log-conf equations given 
by Fattal and Kupferman (2004) --
these constitutive equations combined
with the Navier-Stokes equations constitute a self-contained, non-iterative system of partial differential equations.
In addition to its potential as a fruitful source for understanding the mathematical subtleties of the models from a new perspective,
this analytical description also allows us to fully utilize the Newton-Raphson algorithm in numerical simulations,
which by design should lead to reduced computational effort.
By means of the confined cylinder benchmark we will show that a finite element discretization of these new equations delivers
results of comparable accuracy to known methods.
\end{abstract}

\begin{keyword}
Log-conformation\sep Oldroyd-B\sep Finite element method
\MSC[2010] 76A10\sep 76M10
\end{keyword}
\end{frontmatter}

\section{Introduction}
Viscoelastic phenomena are important for a variety of industrial and medical applications, as, e.g., plastics profile extrusion
and the design of blood pumps. Regardless of the application, the numerical simulation of flows of viscoelastic fluids
often leads to difficulties, when the Weissenberg number,
which relates the elastic forces to the viscous effects, is increased. This challenge has become known as the
High Weissenberg Number Problem \cite{Owens2002}. The difficulty is enhanced by the fact that it has so far not been sufficiently
clarified whether the lack in simulation accuracy should be attributed to purely numerical deficiencies or is an inherent trait
of the utilized constitutive models. One of the more recent approaches to resolve the former are the so-called
log-conformation – or shortly log-conf – formulations going back to \cite{Fattal2004}.

The log-conf formulations are applicable to models of the form
\begin{align}
	\partial_t \bm{\sigma} + (\bm{u}\cdot \nabla) \bm{\sigma} - (\nabla \bm{u})\bm{\sigma}
			- \bm{\sigma}(\nabla\bm{u})^T
		=& - \frac{1}{\lambda} P(\bm{\sigma})\, ,
	\label{eqn:gen_conf_form}
\end{align}
where $\bm{u}$ is a $d$-dimensional velocity vector, $\bm{\sigma}$ the conformation tensor, $\lambda$ the relaxation time and
$P(\bm{\sigma})$ an analytic function. Examples are the Oldroyd-B model \cite{Oldroyd1950} with $P(\bm{\sigma}) = \bm{\sigma} -1$ and the Giesekus model
\cite{Giesekus1982} with ${P(\bm{\sigma}) = \bm{\sigma} -1 + \alpha (\bm{\sigma}-1)^2}$ and the mobility factor $\alpha \in[0,1]$.
It has been shown in \cite{Hulsen1990} that these models require that $\bm{\sigma}$
maintains positive-definiteness through time if the initial data is also positive-definite.
A violation of this condition through the numerical algorithm has been observed to lead to unrecoverable failure of the simulation.
The idea of the log-conf approach is to inherently respect this condition by replacing the original primal degrees of freedom,
i.e., the conformation tensor $\bm{\sigma}$ or the polymeric stress $\bm{T}$, by a new field $\bm{\Psi}$ that is related to the
conformation tensor by the matrix exponential function $\bm{\sigma} = \exp(\bm{\Psi})$; hence the name log-conformation formulation.

Of all possibilities, the choice of the exponential function as a means of assuring positive-definiteness can
be fortified when considering the properties of Lie groups, which are manifolds with a group structure. An important
class of Lie groups are the matrix groups, like the general linear group $\mbox{GL}(d,\mathbb{R})$, which consists of all invertible $d\times d$ matrices.
In the constitutive equation, the tensorial degrees of freedom, like $\bm{\sigma}$, are part of a submanifold of $\mbox{GL}(d,\mathbb{R})$,
which is constituted by the symmetric positive-definite matrices. This space is different as compared to the spaces containing the vectorial degrees of freedom,
which are their own tangent space. The latter is not the case for general manifolds, as for example the symmetric positive-definite matrices.
Nonetheless, the notion of the tangent space is important, since coordinate advancements within the tangent space of a manifold
are guaranteed to remain within the manifold — an advantage when numerically advancing the coordinates.
Fortunately, as $\mbox{GL}(d,\mathbb{R})$ is a Lie group
the matrix exponential function maps the tangent space of the identity element -- also known as the Lie algebra
$\mathfrak{gl}(d,\mathbb{R})=\mathbb{R}^{d\times d}$ --
to the corresponding connected component of the Lie group. Furthermore, the subspace of the symmetric matrices of $\mathfrak{gl}(d,\mathbb{R})$ is mapped
onto the symmetric positive-definite matrices, such that this particular subspace is the natural choice for a vector space for $\bm{\Psi}$.
It should not be left unmentioned that one can still consider other functions than the matrix exponential function to ensure
positive-definiteness, as is, e.g., done in \cite{Balci2011} by the quadratic function.

Apart from the choice of a suitable transforming function, the more intricate task is the derivation of a replacement for the
original constitutive equation that is formulated in terms of the new degrees of freedom. Several approaches have so far been described
\cite{Fattal2004,Coronado2007}. In \cite{Coronado2007}, $\bm{\sigma}$ is replaced by $\exp{\bm{\Psi}}$ in the original
constitutive equation in order to obtain the new equation. Although appealing at first sight, this approach advects $\exp{\bm{\Psi}}$ instead of $\bm{\Psi}$,
leading to possible difficulties in the stabilization of the resulting numerical discretization \cite{Kane2009}.
\cite{Fattal2004} derives the new constitutive equation based on a decomposition of the velocity gradient $\nabla \bm{u}$.
This decomposition leads to an equation with an intrinsically iterative character.
In this paper we derive a new constitutive equation that has neither of these shortcomings.
One of its key features is that it can be stated in a closed form together with the Navier-Stokes equations.
The full derivation has so far been performed for two space dimensions, whereas the three-dimensional case is still subject to current research.

The procedure is outlined in the following fashion. The derivation of the new constitutive equation will be performed in Section \ref{sec:logconf} with the help
of several lemmata, which can be found in \ref{sec:exponentialmapping}. Section \ref{sec:numimpl} introduces the numerical implementation
of this new method, which is subsequently verified by means of the well-known confined cylinder benchmark in Section \ref{sec:benchmark}.
The results are compared to those in \cite{Hulsen2005,Claus2013,Fan1999}.

\section{Log-Conformation}\label{sec:logconf}
For further calculations we will introduce the strain tensor
\begin{align*}
	\varepsilon(\bm{u}) = \frac{1}{2}\left(\nabla\bm{u} + \nabla\bm{u}^T\right)\, ,
\end{align*}
as well as the vorticity tensor
\begin{align*}
	\Omega(\bm{u}) =& \frac{1}{2}\left(\nabla\bm{u} - \nabla\bm{u}^T\right)\, ,
\end{align*}
such that we can rewrite Eq.~\eqref{eqn:gen_conf_form} as
\begin{align}
	\partial_t \bm{\sigma} + (\bm{u}\cdot \nabla) \bm{\sigma} - (\varepsilon(\bm{u})+\Omega(\bm{u}))\bm{\sigma}
			- \bm{\sigma}(\varepsilon(\bm{u})-\Omega(\bm{u}))
		=& - \frac{1}{\lambda} P(\bm{\sigma})\,.
	\label{eqn:gen_conf_form2}
\end{align}
In this section we are going to show that if $\bm{\Psi}$ satisfies
\begin{align}
	\label{eqn:logconf}
	\partial_t\bm{\Psi} + (\bm{u}\cdot \nabla) \bm{\Psi} + [\bm{\Psi},\Omega(\bm{u})]
			- 2 \sum_{n=0}^\infty \frac{B_{2n}}{(2n)!} \{\bm{\Psi},\varepsilon(\bm{u})\}_{2n} &=
			- \frac{1}{\lambda} P\left(e^{\bm{\Psi}}\right) e^{-\bm{\Psi}}\, ,
\end{align}
then $\bm{\sigma}=\exp\bm{\Psi}$ satisfies the original constituitive equation \eqref{eqn:gen_conf_form2}.
In Eq.~\eqref{eqn:logconf}, $B_i$ denote the Bernoulli numbers, $[\bm{X},\bm{Y}]=\bm{X}\bm{Y}-\bm{Y}\bm{X}$ the usual commutator
and $\{\bm{X},\bm{Y}\}_n$ the iterated commutator, which is defined as
\begin{align*}
	\{\bm{X},\bm{Y}\}_n =& [\bm{X},\{\bm{X},\bm{Y}\}_{n-1}]\\
	\{\bm{X},\bm{Y}\}_0 =& \bm{Y}\, .
\end{align*}

Before we come to the proof we will first discuss some properties and prerequisites of this equation.

\begin{rem}[Sobolev spaces and Banach algebras]
The analysis of partial differential equations (PDEs) is highly entangled with the theory of Sobolev spaces.
Therefore, we will assume that $\bm{\Psi}$ is contained in a Sobolev space.
The first thing one realizes when looking at $\bm{\sigma}=\exp\bm{\Psi}$ is that one needs to make sense of the
exponential mapping, which should also map, if possible, into the same Sobolev space.
Mathematically speaking we need a Sobolev space that becomes, equipped with the
pointwise matrix multiplication, a Banach algebra, such that we can define
an analytical functional calculus (cf. \cite[Theorem 10.27]{Rudin}). Restricting ourselves for the moment to the stationary problem
and assuming that $\bm{\Psi}\in H^n(\mathbb{R}^d, \mathbb{R}^{\frac{d(d+1)}{2}})$ it turns out to be
sufficient to demand $n > d/2$ to make the components of $\bm{\Psi}$ lie within a
Banach algebra \cite[Theorem 4.39]{Adams2003}. $\bm{\sigma}$, as well as $P(\bm{\sigma})$, would then also be contained
in $H^n(\mathbb{R}^d, \mathbb{R}^{\frac{d(d+1)}{2}})$.

Moving to the time-dependent setting, we are going to introduce the spaces
\begin{align}
\label{eqn:defH}
\begin{split}
	\mathcal{H} &= C^1([0,T],H^{s-1}(\Omega))\cap C^0([0,T],H^s(\Omega))\\
	\mathcal{H}'&= C^0([0,T],H^{s-1}(\Omega))\, ,
\end{split}
\end{align}
with $s>d/2$ and $\Omega$ being a Lipschitz-bounded domain.
Here, the fact that the multiplications $H^{s-1}(\Omega)\times H^s(\Omega) \to H^{s-1}(\Omega)$ and
$H^s(\Omega)\times H^s(\Omega)\to H^s(\Omega)$ are continuous
\cite[Corollary \S 1.1.1]{giraultraviart86} lets us conclude that $\mathcal{H}$ denotes a Banach algebra. Furthermore,
this multiplication can be extended to a continuous multiplication $\cdot : \mathcal{H}' \times \mathcal{H} \to \mathcal{H}'$. Now deriving the
Banach algebra $H = \mathcal{H}^{d\times d}$ and Banach space $H' = \mathcal{H}'^{d\times d}$, as well as
symmetrized variants thereof
\begin{align*}
	H_{sym} &= \{\bm{X}\in H | \bm{X}^T = \bm{X}\}\\
	H_{sym}' &= \{\bm{X}\in H' | \bm{X}^T = \bm{X}\}\, ,
\end{align*}
we are going to search for solutions of Eq.~\eqref{eqn:logconf} in $H_{sym}$. The space $H'$ will serve as the Banach space
containing the derivatives, since from $\bm{\Psi}\in H_{sym}$ it follows that $\partial_t\bm{\Psi},\nabla\bm{\Psi}\in H_{sym}'$. Moreover, requiring
$\varepsilon(\bm{u})\in H_{sym}'$ lets us interpret all summands in Eq.~\eqref{eqn:logconf} as elements of $H'$.

Allowing to formulate the theory in a Sobolev space setting is, from the theoretical point of view, one of the key advantages of our method
compared to the original log-conf formulation \cite{Fattal2004}, although one has to add that it is not restricted to the choice in \eqref{eqn:defH} and
there are other spaces that fulfill the requirements on $\mathcal{H}$ and $\mathcal{H}'$, fully listed in \ref{sec:exponentialmapping}.
Examples are the smooth function spaces, in which all equations can be thought of as pointwise
evaluations of the specific degrees of freedom. The latter is especially helpful for comprehension
since most of the following proofs are purely algebraic in their nature.

\end{rem}

\begin{rem}[Well-definedness of the series]\label{rem:bernoullinumbers}
We have already outlined in the last paragraph that all summands of the series are elements of $H'$.
What is left to consider is the absolute convergence of the series.
It can be analyzed using the generating function definition of the Bernoulli numbers. Together with $B_1 = -\frac{1}{2}$
as the only non-zero odd Bernoulli number it can be stated as
\begin{align}\label{eqn:genform_bernoulli}
	\sum_{n=0}^\infty \frac{B_{2n}}{(2n)!} x^{2n} &= \frac{x}{2} + \frac{x}{e^x-1}\quad \forall |x|< 2\pi\,.
\end{align}
Furthermore, the inequality $||[\bm{\Psi},\varepsilon(\bm{u})]_{2n}||_{H'} \leq 2^{2n} ||\bm{\Psi}||^{2n}_{H}||\varepsilon(\bm{u})||_{H'}$ and the fact
that the Bernoulli numbers are alternating ($(-1)^{n+1}B_{2n} > 0$ if $n\geq1$)  guarantee that formula \eqref{eqn:logconf} is well-defined
at least for $||\bm{\Psi}||_H<\pi$.
Later we will alleviate this condition for the two-dimensional case.
\end{rem}

\begin{rem}[Symmetry]
As the only two terms containing derivatives of $\bm{\Psi}$, namely $\partial_t\bm{\Psi}$ and $(\bm{u}\cdot \nabla) \bm{\Psi}$,
are clearly symmetric matrices, one also wants the other terms of the formula to be symmetric,
since otherwise one would unnecessarily constrain the number of degrees of freedom by a pure algebraic identity. Although not strictly forbidden,
one could in this case argue that the model would not reflect the "natural" degrees of freedom of the underlying physical nature.
Furthermore, a more practical concern is that it would limit the admissible choices for the boundary conditions.
Fortunately, this is not the case:
One can assert for commutators that if $\bm{X}$ is symmetric and $\bm{Y}$ is
antisymmetric then $[\bm{X},\bm{Y}]$ has to be symmetric. This argument directly applies to the term involving $\Omega(\bm{u})$ which is by definition
the antisymmetric part of the strain tensor. By the same argument one then also sees that $\{\bm{\Psi},\varepsilon(\bm{u})\}_{2n}$ has to be symmetric.
So all terms involved in Eq.~\eqref{eqn:logconf} can be understood as symmetric matrices.
\end{rem}

After having defined the setting we have everything at hand to prove the theorem that encompasses Eq.~\eqref{eqn:logconf}.
\begin{thm}\label{thm:firstmainthm}
Given $\bm{u}\in C^0([0,T],H^s(\Omega,\mathbb{R}^d))$, let $\bm{\Psi}\in H_{sym}$ with $||\bm{\Psi}||_H<\pi$ satisfy Eq.~\eqref{eqn:logconf}, then
$\bm{\sigma}=\exp\bm{\Psi}\in H_{sym}$ solves the original constituitive equation \eqref{eqn:gen_conf_form2}.
\end{thm}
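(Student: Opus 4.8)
The plan is to compute the material derivative $\mathrm D\bm\sigma := \partial_t\bm\sigma+(\bm u\cdot\nabla)\bm\sigma$ of $\bm\sigma=\exp\bm\Psi$ directly and to check, using Eq.~\eqref{eqn:logconf}, that it equals $(\varepsilon(\bm u)+\Omega(\bm u))\bm\sigma+\bm\sigma(\varepsilon(\bm u)-\Omega(\bm u))-\tfrac1\lambda P(\bm\sigma)$, i.e.\ Eq.~\eqref{eqn:gen_conf_form2}. First I would record the easy structural facts: since $\bm\Psi^T=\bm\Psi$ we have $\bm\sigma^T=e^{\bm\Psi^T}=\bm\sigma$, and since $H^s(\Omega)$ is a Banach algebra $\exp$ is analytic there, giving $\bm\sigma\in C^0([0,T],H^s_{sym})$; the $C^1([0,T],H^{s-1})$ regularity then drops out of the derivative-of-exponential formula below together with $\mathrm D\bm\Psi\in H_{sym}'$, so $\bm\sigma\in H_{sym}$. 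The main engine is a lemma from \ref{sec:exponentialmapping}: because $\mathrm D$ is a derivation of the algebra $H$ with values in $H'$, one has
\begin{align*}
	\mathrm D(e^{\bm\Psi}) = e^{\bm\Psi}\, f(\mathrm{ad}_{\bm\Psi})(\mathrm D\bm\Psi),\qquad \mathrm{ad}_{\bm\Psi}(\bm X)=[\bm\Psi,\bm X],\qquad f(x)=\frac{1-e^{-x}}{x}.
\end{align*}
This reduces the theorem to an algebraic manipulation of bounded operators built from $\mathrm{ad}_{\bm\Psi}$ acting on $H'$.

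Next I would substitute the right-hand side of Eq.~\eqref{eqn:logconf} for $\mathrm D\bm\Psi$ and split into three contributions. For the vorticity term, $[\bm\Psi,\Omega(\bm u)]=\mathrm{ad}_{\bm\Psi}(\Omega(\bm u))$ and $f(x)\cdot x=1-e^{-x}$ give $-e^{\bm\Psi}(1-e^{-\mathrm{ad}_{\bm\Psi}})(\Omega(\bm u))$; applying the lemma $e^{-\mathrm{ad}_{\bm\Psi}}=\mathrm{Ad}_{e^{-\bm\Psi}}$, i.e.\ $e^{-\mathrm{ad}_{\bm\Psi}}(\bm X)=e^{-\bm\Psi}\bm X e^{\bm\Psi}$, this collapses to $\Omega(\bm u)\bm\sigma-\bm\sigma\Omega(\bm u)$. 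For the relaxation term, the key point is that $P(e^{\bm\Psi})e^{-\bm\Psi}$ is a function of $\bm\Psi$ alone, hence commutes with $\bm\Psi$, so $\mathrm{ad}_{\bm\Psi}$ annihilates it and $f(\mathrm{ad}_{\bm\Psi})$ acts as the identity on it; left-multiplying by $e^{\bm\Psi}$ then yields exactly $-\tfrac1\lambda P(\bm\sigma)$.

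The heart of the argument is the strain term. Here $\{\bm\Psi,\varepsilon(\bm u)\}_{2n}=\mathrm{ad}_{\bm\Psi}^{2n}(\varepsilon(\bm u))$, so the sum equals $2\,g(\mathrm{ad}_{\bm\Psi})(\varepsilon(\bm u))$ with $g(x)=\sum_n\frac{B_{2n}}{(2n)!}x^{2n}=\frac x2+\frac{x}{e^x-1}$ by Eq.~\eqref{eqn:genform_bernoulli}. Applying $e^{\bm\Psi}f(\mathrm{ad}_{\bm\Psi})$ and using the scalar identity
\begin{align*}
	2f(x)g(x)=2(1-e^{-x})\left(\frac{1}{2}+\frac{1}{e^x-1}\right)=1+e^{-x},
\end{align*}
(where $\frac{1-e^{-x}}{e^x-1}=e^{-x}$), this contribution becomes $e^{\bm\Psi}(1+e^{-\mathrm{ad}_{\bm\Psi}})(\varepsilon(\bm u))=e^{\bm\Psi}\varepsilon(\bm u)+\varepsilon(\bm u)e^{\bm\Psi}=\varepsilon(\bm u)\bm\sigma+\bm\sigma\varepsilon(\bm u)$, again via $e^{-\mathrm{ad}_{\bm\Psi}}=\mathrm{Ad}_{e^{-\bm\Psi}}$. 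Adding the three pieces, $\mathrm D\bm\sigma=(\Omega\bm\sigma-\bm\sigma\Omega)+(\varepsilon\bm\sigma+\bm\sigma\varepsilon)-\tfrac1\lambda P(\bm\sigma)$, which is precisely Eq.~\eqref{eqn:gen_conf_form2}.

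The hard part is not the algebra but legitimizing the functional calculus in the Sobolev/Banach-algebra setting: one needs that $\mathrm{ad}_{\bm\Psi}$ is a bounded operator on $H'$ with $\|\mathrm{ad}_{\bm\Psi}\|\le 2\|\bm\Psi\|_H<2\pi$ (from the module multiplication $\mathcal H'\times\mathcal H\to\mathcal H'$), so that the power series for $f$ and $g$ (radii $\infty$ and $2\pi$) define bounded operators and $f(\mathrm{ad}_{\bm\Psi})g(\mathrm{ad}_{\bm\Psi})=(fg)(\mathrm{ad}_{\bm\Psi})$; one also needs the operator identities $e^{\pm\mathrm{ad}_{\bm\Psi}}=\mathrm{Ad}_{e^{\pm\bm\Psi}}$ on $H'$ and the derivative-of-exponential formula with $\mathrm D\bm\Psi$ only in $H'$. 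These are exactly the statements assembled in \ref{sec:exponentialmapping}, and the hypothesis $\|\bm\Psi\|_H<\pi$ is precisely what guarantees convergence of all the series involved (a bound that is later relaxed in the two-dimensional case). Once these tools are in hand, the proof is the termwise substitution plus the two scalar identities $f(x)x=1-e^{-x}$ and $2f(x)g(x)=1+e^{-x}$ used above.
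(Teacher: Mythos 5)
Your proposal is correct and follows the same route as the paper's proof: differentiate $\bm{\sigma}=e^{\bm{\Psi}}$ with the derivative-of-exponential formula, substitute Eq.~\eqref{eqn:logconf}, and handle the vorticity, relaxation and strain contributions separately, with the commutation of $P(e^{\bm{\Psi}})e^{-\bm{\Psi}}$ with $\bm{\Psi}$ and Hadamard's lemma (Lemma~\ref{lem:hadamard}) playing the same role in both arguments. The one genuine difference lies in the central step: the paper expands everything as a double series of iterated commutators, rearranges it, and evaluates the resulting Cauchy-product coefficients via the Bernoulli recursion $\sum_{n=0}^{i}\frac{B_n}{n!\,(i-n+1)!}=\delta_{i0}$, whereas you work in the holomorphic functional calculus of $\mathrm{ad}_{\bm{\Psi}}$ and settle the same computation with the scalar identities $f(x)\,x=1-e^{-x}$ and $2f(x)g(x)=1+e^{-x}$. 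This packaging avoids the coefficient bookkeeping and makes the role of the hypothesis transparent ($\|\mathrm{ad}_{\bm{\Psi}}\|\le 2\|\bm{\Psi}\|_H<2\pi$ is precisely the radius of convergence of $g$), at the price of justifying the product rule for power series of a bounded operator, which is exactly the absolute-convergence rearrangement the paper carries out by hand. Two small translations are yours to supply but are routine: your left-multiplied formula $\mathrm{D}\,e^{\bm{\Psi}}=e^{\bm{\Psi}}f(\mathrm{ad}_{\bm{\Psi}})(\mathrm{D}\bm{\Psi})$ is the conjugate (via Lemma~\ref{lem:hadamard}) of the right-multiplied series in Eq.~\eqref{eqn:derivative1} actually stated in Corollary~\ref{cor:wilcox}, and the appendix formulates its lemmas as series identities rather than as an $\mathrm{ad}$-calculus, so the identification $e^{\pm\mathrm{ad}_{\bm{\Psi}}}=\mathrm{Ad}_{e^{\pm\bm{\Psi}}}$ on $H'$ should be cited as an application of that lemma rather than assumed.
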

\begin{proof}
In a first step, we apply Eq.~\eqref{eqn:derivative1} to the advective-derivative of the conformation tensor
\begin{align*}
	\left(\partial_t + \bm{u}\cdot\nabla\right) \bm{\sigma} =& \sum_{k=0}^\infty \frac{1}{(k+1)!} \{\bm{\Psi},
			\left(\partial_t + \bm{u}\cdot\nabla\right) \bm{\Psi}\}_{k}\, \bm{\sigma}
\end{align*}
where we now will insert Eq.~\eqref{eqn:logconf}
\begin{align}
\begin{split}
	\left(\partial_t + \bm{u}\cdot\nabla\right) \bm{\sigma} =& - \frac{1}{\lambda} P(\bm{\sigma})
			- \sum_{k=0}^\infty \frac{1}{(k+1)!} \{\bm{\Psi},\Omega(\bm{u})\}_{k+1}\, \bm{\sigma}\\
		& + 2 \sum_{k=0}^\infty \frac{1}{(k+1)!} \sum_{n=0}^\infty \frac{B_{2n}}{(2n)!} \{\bm{\Psi},\varepsilon(\bm{u})\}_{2n+k}\, \bm{\sigma}\, .
\end{split}
\label{eqn:theorem_intermediate}
\end{align}
Here, the fact that $P(e^{\bm{\Psi}})e^{-\bm{\Psi}}$ commutes with $\bm{\Psi}$ has been already incorporated. The second summand
can be evaluated using Lemma \ref{lem:hadamard} and a simple index shift
\begin{align*}
	\sum_{k=0}^\infty \frac{1}{(k+1)!} \{\bm{\Psi},\Omega(\bm{u})\}_{k+1}\, \bm{\sigma} =&
			\sum_{k=0}^\infty \frac{1}{k!} \{\bm{\Psi},\Omega(\bm{u})\}_{k}\, \bm{\sigma} - \Omega(\bm{u})\,\bm{\sigma}\\
		=& \bm{\sigma}\,\Omega(\bm{u})- \Omega(\bm{u})\,\bm{\sigma}\\
		=& [\bm{\sigma},\Omega(\bm{u})]\, .
\end{align*}
The third term is processed by augmenting the series with the odd Bernoulli numbers, of which only $B_1=-\frac{1}{2}$ is non-zero,
and then rearranging the series, such that powers of $\bm{\Psi}$ are collected
\begin{align*}
	\sum_{k=0}^\infty \frac{1}{(k+1)!} \sum_{n=0}^\infty \frac{B_{2n}}{(2n)!} \{\bm{\Psi},\varepsilon(\bm{u})\}_{2n+k}\, \bm{\sigma}
		=& \sum_{k=0}^\infty \frac{1}{(k+1)!} \sum_{n=0}^\infty \frac{B_{n}}{n!} \{\bm{\Psi},\varepsilon(\bm{u})\}_{n+k}\, \bm{\sigma}\\
			& - B_1 \sum_{k=0}^\infty \frac{1}{(k+1)!} \{\bm{\Psi},\varepsilon(\bm{u})\}_{k+1}\, \bm{\sigma}\\
		=& \sum_{i=0}^\infty \{\bm{\Psi},\varepsilon(\bm{u})\}_{i} \, \bm{\sigma} \sum_{n=0}^i \frac{B_{n}}{n!(i-n+1)!}\\
			& +\frac{1}{2} \sum_{k=0}^\infty \frac{1}{(k+1)!} \{\bm{\Psi},\varepsilon(\bm{u})\}_{k+1}\, \bm{\sigma}\, .
\end{align*}
Now we use the recursive definition of the Bernoulli numbers
\begin{align*}
	\sum_{n=0}^i \frac{B_{n}}{n!(i-n+1)!} =& \left\{\begin{array}{ll} 1 & \mbox{for}\, i=0\\ 0 & \mbox{for}\, i\geq 1\end{array}\right.\,
\end{align*}
which can be derived from the generating function definition by comparing coefficients of the left and right hand side
of $1=\left(\sum_k \frac{B_k}{k!}x^k\right)\left(\frac{e^x-1}{x}\right)$.
This together with another application of Lemma \ref{lem:hadamard} finally yields
\begin{align}\label{eqn:firstmainthm}
	\sum_{k=0}^\infty \frac{1}{(k+1)!} \sum_{n=0}^\infty \frac{B_{2n}}{(2n)!} \{\bm{\Psi},\varepsilon(\bm{u})\}_{2n+k}\, \bm{\sigma}
		=& \frac{1}{2} \varepsilon(\bm{u})\bm{\sigma} + \frac{1}{2} \bm{\sigma}\varepsilon(\bm{u})\, .
\end{align}
Pulling all ends together, Eq.~\eqref{eqn:theorem_intermediate} amounts to
\begin{align*}
	\left(\partial_t + \bm{u}\cdot\nabla\right) \bm{\sigma} =& - \frac{1}{\lambda} P(\bm{\sigma})
			- [\bm{\sigma},\Omega(\bm{u})] + \varepsilon(\bm{u})\bm{\sigma} + \bm{\sigma}\varepsilon(\bm{u})\, ,
\end{align*}
which had to be proven.
\end{proof}

As already mentioned in Remark \ref{rem:bernoullinumbers}, it is not really satisfactory to have the bound $||\bm{\Psi}||_H < \pi$, which is necessary
to guarantee absolute convergence of the series. In the following theorem, we will show how to dissolve this bound by identifying a recursion relation
for the iterated commutator as the one given in Lemma \ref{lem:2Dsimp}. It can be used to replace the series by an analytical function. Unfortunately,
one cannot state a single recursion relation as in Lemma \ref{lem:2Dsimp} for arbitrary dimensionality $d$, but has to restrict oneself to a specific $d$.
In the following, we will carry out the details for two dimensions.
The three-dimensional case is far more elaborate and therefore still subject to our current research.
\newpage
\subsection{2D case}

\begin{thm}
Let the velocity field $u\in C^0([0,T],H^s(\Omega,\mathbb{R}^2))$ be given. If $\bm{\Psi}\in H_{sym}$ is a solution of
\begin{align}
\begin{split}
	&\partial_t\bm{\Psi} + (\bm{u}\cdot \nabla) \bm{\Psi} + [\bm{\Psi},\Omega(\bm{u})]
			+ \frac{1}{\lambda} P\left(e^{\bm{\Psi}}\right) e^{-\bm{\Psi}} - 2\varepsilon(\bm{u})\\ &\quad
			- 2 \left(\begin{array}{cc} -\Psi_{12} & \gamma(\bm{\Psi}) \\ \gamma(\bm{\Psi}) & \Psi_{12}\end{array}\right)
			\left[\gamma(\bm{\Psi})\varepsilon(\bm{u})_{12} - \Psi_{12}\gamma(\varepsilon(\bm{u}))\right] \cdot f(\bm{\Psi}) =
			0\, ,
\end{split}
\label{eqn:2DPsi}
\end{align}
with
\begin{align*}
	f(\bm{\Psi}) =& \frac{1}{\gamma(\bm{\Psi})^2+\Psi_{12}^2} \left(
			\sqrt{\gamma(\bm{\Psi})^2+\Psi_{12}^2} + \frac{2\sqrt{\gamma(\bm{\Psi})^2+\Psi_{12}^2}}{\exp\left(2\sqrt{\gamma(\bm{\Psi})^2+\Psi_{12}^2}\right)-1}
			- 1 \right)
\end{align*}
and $\gamma(\bm{C})=\frac{1}{2}\left(C_{11}-C_{22}\right)$, then
the conformation tensor $\bm{\sigma} = \exp\bm{\Psi}\in H_{sym}$ solves the original constitutive equation~\eqref{eqn:gen_conf_form2}.
\end{thm}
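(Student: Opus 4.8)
The plan is to recognise Eq.~\eqref{eqn:2DPsi} as a closed-form rewriting of Eq.~\eqref{eqn:logconf} which, unlike the latter, carries no a~priori bound $\|\bm{\Psi}\|_H<\pi$, and then to repeat the argument of Theorem~\ref{thm:firstmainthm} in this enlarged setting. The algebraic heart is the two-dimensional recursion for the iterated commutator supplied by Lemma~\ref{lem:2Dsimp}: since the trace part of the symmetric matrix $\bm{\Psi}$ commutes with every term, only its traceless part $\tilde{\bm{\Psi}}$ enters, and an elementary $2\times2$ computation (using $\tilde{\bm{\Psi}}^2=r^2\bm{1}$ with $r:=\sqrt{\gamma(\bm{\Psi})^2+\Psi_{12}^2}$) gives
\begin{align*}
	\{\bm{\Psi},\varepsilon(\bm{u})\}_{n+2} &= 4r^2\,\{\bm{\Psi},\varepsilon(\bm{u})\}_{n}\quad(n\geq1),\\
	\{\bm{\Psi},\varepsilon(\bm{u})\}_{2} &= 4\bigl(\gamma(\bm{\Psi})\varepsilon(\bm{u})_{12}-\Psi_{12}\gamma(\varepsilon(\bm{u}))\bigr)\,\bm{M},
\end{align*}
where $\bm{M}:=\left(\begin{smallmatrix}-\Psi_{12}&\gamma(\bm{\Psi})\\ \gamma(\bm{\Psi})&\Psi_{12}\end{smallmatrix}\right)$ is the matrix appearing in Eq.~\eqref{eqn:2DPsi}. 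Inserting this into the Bernoulli series of Eq.~\eqref{eqn:logconf}, the $n=0$ term contributes $2\varepsilon(\bm{u})$, the factor $(4r^2)^{n-1}$ pulls out of the $n\geq1$ terms, and the remaining scalar series is resummed via the generating function \eqref{eqn:genform_bernoulli} to $r+\tfrac{2r}{e^{2r}-1}-1=r^2f(\bm{\Psi})$; together with the explicit $\{\bm{\Psi},\varepsilon(\bm{u})\}_2$ this is exactly the matrix term in Eq.~\eqref{eqn:2DPsi}. Hence Eqs.~\eqref{eqn:2DPsi} and \eqref{eqn:logconf} coincide whenever $\|\bm{\Psi}\|_H<\pi$, so Theorem~\ref{thm:firstmainthm} already settles that range. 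Moreover $f(\bm{\Psi})=g(r^2)$ with $g(t)=\sum_{m\geq0}\tfrac{B_{2m+2}}{(2m+2)!}4^{m+1}t^m$, whose radius of convergence is $\pi^2$ but which continues real-analytically along the whole half-line $t\geq0$ to the stated expression, the only apparent singularity (at $r=0$) being removable. Thus every summand of Eq.~\eqref{eqn:2DPsi} lies in $H'$ for \emph{all} $\bm{\Psi}\in H_{sym}$.

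Next I would repeat the proof of Theorem~\ref{thm:firstmainthm} for arbitrary $\bm{\Psi}\in H_{sym}$. Applying Eq.~\eqref{eqn:derivative1} to $(\partial_t+\bm{u}\cdot\nabla)\bm{\sigma}$ -- the series $\sum_k\tfrac1{(k+1)!}\{\bm{\Psi},\,\cdot\,\}_k$ converges absolutely for every $\bm{\Psi}$, with no smallness needed -- and substituting Eq.~\eqref{eqn:2DPsi} for $(\partial_t+\bm{u}\cdot\nabla)\bm{\Psi}$, the $P$-term contributes only its $k=0$ summand $-\tfrac1\lambda P(\bm{\sigma})$, because $P(e^{\bm{\Psi}})e^{-\bm{\Psi}}$ commutes with $\bm{\Psi}$; and the $\Omega(\bm{u})$-term is handled by Lemma~\ref{lem:hadamard} and an index shift exactly as before, giving $-[\bm{\sigma},\Omega(\bm{u})]$. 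Neither step uses $\|\bm{\Psi}\|_H<\pi$. What remains is therefore the single identity $\sum_{k\geq0}\tfrac1{(k+1)!}\{\bm{\Psi},\bm{X}\}_k\bm{\sigma}=\varepsilon(\bm{u})\bm{\sigma}+\bm{\sigma}\varepsilon(\bm{u})$ with $\bm{X}:=2\varepsilon(\bm{u})+2\bm{M}\,[\gamma(\bm{\Psi})\varepsilon(\bm{u})_{12}-\Psi_{12}\gamma(\varepsilon(\bm{u}))]\,f(\bm{\Psi})$, i.e.\ $\bm{X}$ is the non-trivial part of Eq.~\eqref{eqn:2DPsi}.

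This identity is the main obstacle, and I would prove it directly so as not to reimpose any convergence restriction. Decomposing $\varepsilon(\bm{u})=\tfrac12\mathrm{tr}\,\varepsilon(\bm{u})\,\bm{1}+a\tilde{\bm{\Psi}}+b\bm{M}$ against the orthogonal basis $\{\bm{1},\tilde{\bm{\Psi}},\bm{M}\}$ of the symmetric $2\times2$ matrices (valid for $r\neq0$), the key algebraic identity $1+r^2f(\bm{\Psi})=r\coth r$ turns $\bm{X}$ into $\mathrm{tr}\,\varepsilon(\bm{u})\,\bm{1}+2a\tilde{\bm{\Psi}}+2b\,r\coth r\,\bm{M}$. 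The $\bm{1}$- and $\tilde{\bm{\Psi}}$-pieces commute with $\bm{\Psi}$, hence with $\bm{\sigma}$, so they contribute only their $k=0$ terms, which already equal $\mathrm{tr}\,\varepsilon(\bm{u})\,\bm{\sigma}+a(\tilde{\bm{\Psi}}\bm{\sigma}+\bm{\sigma}\tilde{\bm{\Psi}})$. For the $\bm{M}$-piece one pulls out the commuting scalar $2b\,r\coth r$, resums $\sum_k\tfrac1{(k+1)!}\{\bm{\Psi},\bm{M}\}_k\bm{\sigma}$ into $\sinh/\cosh$ using $\{\bm{\Psi},\bm{M}\}_1=2r^2\bm{J}$ and $\{\bm{\Psi},\bm{M}\}_2=4r^2\bm{M}$ with $\bm{J}=\left(\begin{smallmatrix}0&1\\ -1&0\end{smallmatrix}\right)$, and substitutes $\bm{\sigma}=e^{\frac12\mathrm{tr}\bm{\Psi}}\bigl(\cosh r\,\bm{1}+\tfrac{\sinh r}{r}\tilde{\bm{\Psi}}\bigr)$ together with $\tilde{\bm{\Psi}}\bm{M}=-\bm{M}\tilde{\bm{\Psi}}=r^2\bm{J}$ and $\bm{J}\tilde{\bm{\Psi}}=-\bm{M}$; the hyperbolic identities then cancel the $\bm{J}$-part and collapse the $\bm{M}$-part to $b(\bm{M}\bm{\sigma}+\bm{\sigma}\bm{M})$. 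Summing the three contributions yields $\varepsilon(\bm{u})\bm{\sigma}+\bm{\sigma}\varepsilon(\bm{u})$. (A shorter but less self-contained alternative: both sides of the identity are real-analytic in $\bm{\Psi}$ on the connected space $H_{sym}$ and agree on $\|\bm{\Psi}\|_H<\pi$ by the first paragraph, hence everywhere -- this trades the computation for a check that $f$ is analytic across $r=0$.) The degenerate set $r=0$, where $\bm{\Psi}$ is pointwise a multiple of $\bm{1}$, is covered by continuity, since there $\bm{M}$ and the bracket $\gamma(\bm{\Psi})\varepsilon(\bm{u})_{12}-\Psi_{12}\gamma(\varepsilon(\bm{u}))$ both vanish and Eq.~\eqref{eqn:2DPsi} degenerates to the scalar case. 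Collecting the $P$-, $\Omega(\bm{u})$- and $\varepsilon(\bm{u})$-contributions reproduces Eq.~\eqref{eqn:gen_conf_form2}.
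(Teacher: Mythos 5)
Your proposal is correct, and its first half (identifying Eq.~\eqref{eqn:2DPsi} as the resummation of Eq.~\eqref{eqn:logconf} via Lemma~\ref{lem:2Dsimp} and the generating function \eqref{eqn:genform_bernoulli}) coincides with the paper's first step. Where you genuinely diverge is in removing the restriction $\|\bm{\Psi}\|_H<\pi$: the paper keeps the Wilcox integral representation, introduces a scalar parameter $\beta$, observes that the desired identity holds for $|\beta|<\pi/\|\bm{\Psi}\|_H$ by the series computation of Theorem~\ref{thm:firstmainthm}, and then invokes holomorphy in $\beta$ (Cauchy plus Fubini) and uniqueness of analytic continuation to reach $\beta=1$; you instead prove the remaining identity $\sum_k\tfrac1{(k+1)!}\{\bm{\Psi},\bm{X}\}_k\,\bm{\sigma}=\varepsilon(\bm{u})\bm{\sigma}+\bm{\sigma}\varepsilon(\bm{u})$ by a direct $2\times2$ computation, decomposing $\varepsilon(\bm{u})$ in the basis $\{\bm{1},\tilde{\bm{\Psi}},\bm{M}\}$, using $1+r^2f=r\coth r$, the relations $\{\bm{\Psi},\bm{M}\}_1=2r^2\bm{J}$, $\{\bm{\Psi},\bm{M}\}_2=4r^2\bm{M}$, and the closed form of $e^{\bm{\Psi}}$ — I checked the hyperbolic bookkeeping and it does close up, the $\bm{J}$-component cancels and the $\bm{M}$-component gives $b(\bm{M}\bm{\sigma}+\bm{\sigma}\bm{M})$. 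Your route is more elementary and self-contained (no complex-analytic machinery, and it exhibits the resummed series explicitly), but it is intrinsically pointwise: the coefficients $a,b$ require inverting $r^2$, so you need the case split at $r=0$ and the concrete realization of $\mathcal{H}$ as continuous functions (pointwise evaluation plus $H'$-convergence of the partial sums then upgrades the pointwise identity to an identity in $H'$ — worth stating explicitly). The paper's $\beta$-continuation, by contrast, reuses the Theorem~\ref{thm:firstmainthm} computation wholesale and works at the level of the abstract Banach algebra of the appendix, with no degeneracy discussion; your parenthetical alternative (real-analytic continuation in $\bm{\Psi}$ over $H_{sym}$) is essentially the paper's idea in a less convenient parameter and would need more care than the one-variable version to justify.
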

\begin{proof}
The proof is twofold: in a first step we will show that the assertion is true for $||\bm{\Psi}||_H<\pi$ and then in a second step that
this restriction is only artificial.

As one can already guess from the comparison of Eq.~\eqref{eqn:logconf} and Eq.~\eqref{eqn:2DPsi} one needs to
replace the series in Eq.~\eqref{eqn:logconf} by an analytical function, which then together with
Theorem \ref{thm:firstmainthm} already yields the conclusion for $||\bm{\Psi}||_H<\pi$.
For that we will first split off the $n=0$ term from the series. Applying Lemma \ref{lem:2Dsimp}
with $\bm{A}=\bm{\Psi}$ and $\bm{B} = \varepsilon(\bm{u})$ and collecting the $n$-dependent terms, we just have to evaluate
\begin{align*}
	f(\bm{\Psi}) =& \sum_{n=1}^\infty \frac{B_{2n}}{(2n)!} 2^{2n} (\gamma(\bm{\Psi})^2 + \Psi_{12}^2)^{n-1}\, .
\end{align*}
The generating function definition of the even Bernoulli numbers (Eq.~\eqref{eqn:genform_bernoulli}) gives us then, after splitting off the $n=0$ term,
the final form of $f(\bm{\Psi})$.

The proof of the second part is in principle similar to that of Theorem \ref{thm:firstmainthm},
just with the difference that the Wilcox Lemma in its initial form is used:
\begin{align*}
	\left(\partial_t + \bm{u}\cdot\nabla\right) \bm{\sigma} =& \int_0^1 e^{(1-\alpha)\bm{\Psi}}
			\left(\left(\partial_t + \bm{u}\cdot\nabla\right) \bm{\Psi}\right) e^{\alpha\bm{\Psi}}\, d\alpha\, .
\end{align*}
Plugging in Eq.~\eqref{eqn:2DPsi} we know for most of the terms the result due to Corollary \ref{cor:wilcox}, as we have already shown in Theorem \ref{thm:firstmainthm}. Only the terms including $\varepsilon(\bm{u})$ need to be reconsidered, since
the series involved in Eq.~\eqref{eqn:logconf} is the only reason for the bound $||\bm{\Psi}||_H<\pi$.
Let us introduce a variable $\beta$ and prove the more generic result
\begin{align}
	\int_0^1 e^{(1-\alpha)\beta\bm{\Psi}} A_\beta e^{\alpha\beta\bm{\Psi}}\, d\alpha
		=& \frac{1}{2} \varepsilon(\bm{u}) e^{\beta\bm{\Psi}} + \frac{1}{2} e^{\beta\bm{\Psi}} \varepsilon(\bm{u})
\label{eqn:analyticcontformula1}
\end{align}
with
\begin{align*}
	A_\beta =& \varepsilon(\bm{u}) + \left(\begin{array}{cc} -\Psi_{12} & \gamma(\bm{\Psi}) \\ \gamma(\bm{\Psi}) & \Psi_{12}\end{array}\right)
			\left[\gamma(\bm{\Psi})\varepsilon(\bm{u})_{12} - \Psi_{12}\gamma(\varepsilon(\bm{u}))\right] \cdot f_\beta(\bm{\Psi})
\end{align*}
and
\begin{align*}
	f_\beta(\bm{\Psi}) =& \frac{1}{\gamma(\bm{\Psi})^2+\Psi_{12}^2} \left(\beta
			\sqrt{\gamma(\bm{\Psi})^2+\Psi_{12}^2} + \frac{2\beta\sqrt{\gamma(\bm{\Psi})^2+\Psi_{12}^2}}
				{\exp\left(2\beta\sqrt{\gamma(\bm{\Psi})^2+\Psi_{12}^2}\right)-1}
			- 1 \right)\, .
\end{align*}
The important part to notice is now that the proof of
Theorem \ref{thm:firstmainthm} already implies Eq.~\eqref{eqn:analyticcontformula1} for $|\beta| < \frac{\pi}{||\bm{\Psi}||_H}$, since
\begin{align*}
	\int_0^1 e^{(1-\alpha)\beta\bm{\Psi}} A_\beta e^{\alpha\beta\bm{\Psi}}\, d\alpha
		&\overset{\aligneqn{\eqref{eqn:hadamard-int1}}{}}{=} \int_0^1 e^{(1-\alpha)\beta\bm{\Psi}} \left( \sum_{n=0}^\infty \frac{B_{2n}}{(2n)!}
					\{\beta \bm{\Psi}, \varepsilon(\bm{u})\}_{2n}\right)
					e^{\alpha\beta\bm{\Psi}}\, d\alpha\\
		&\overset{\aligneqn{\eqref{eqn:hadamard-int1}}{\eqref{eqn:hadamard-int1}}}{=} \sum_{k=0}^\infty \frac{1}{(k+1)!} \sum_{n=0}^\infty \frac{B_{2n}}{(2n)!}
					\{\beta \bm{\Psi}, \varepsilon(\bm{u})\}_{2n+k} e^{\beta\bm{\Psi}}\\
		&\overset{\aligneqn{\eqref{eqn:hadamard-int1}}{\eqref{eqn:firstmainthm}}}{=} \frac{1}{2} \varepsilon(\bm{u}) e^{\beta\bm{\Psi}}
					+ \frac{1}{2} e^{\beta\bm{\Psi}} \varepsilon(\bm{u})\, .
\end{align*}
Furthermore, the integrand of the left side of Eq.~\eqref{eqn:analyticcontformula1} is a holomorphic function in a neighborhood of the real axis, which then
-- together with a combination of Cauchy's and Fubini's Theorem -- shows that the whole integral is holomorphic in that region.
As the right-hand side is clearly holomorphic for all $\beta \in \mathbb{C}$, it follows with the uniqueness of analytic continuation that Eq.~ \eqref{eqn:analyticcontformula1} also has to hold for $\beta =1$, which had to be proven originally.
\end{proof}

Note that this proof carries through irrespective of whether we interpret $\bm{\Psi},\varepsilon(\bm{u})$ as matrices or as elements of the Banach space $H'$.
The concept of analytic continuation works in both cases (cf. \cite[Theorem 3.31]{Rudin} for the notion of holomorphy in the
Banach space setting).

\begin{rem}
It shall be noted that $2\sqrt{\gamma(\bm{\Psi})^2+\Psi_{12}^2}$ is -- assuming sufficient regularity --
just the difference between the two eigenvalues of $\bm{\Psi}$ at a given point $x$.
This becomes evident if one looks at the diagonalization of $\bm{\Psi} (x) = \bm{O} \mbox{diag}(\lambda_1,\lambda_2)\bm{O}^T$, which -- together
with the well-known fact that the identity matrix commutes with every other matrix -- yields
\begin{align*}
	\{\bm{\Psi}, \varepsilon(\bm{u})\}_{2n} =& \bm{O} \{\mbox{diag}(\lambda_1,\lambda_2), \bm{O}^T \varepsilon(\bm{u}) \bm{O}\}_{2n} \bm{O}^T\\
		=& \bm{O} \{\mbox{diag}(\lambda_1-\lambda_2,0), \bm{O}^T \varepsilon(\bm{u}) \bm{O}\}_{2n} \bm{O}^T\, .
\end{align*}
Therefore, the $n$-dependent part that is encapsulated in the function $f(\bm{\Psi})$ can only depend on $2\sqrt{\gamma(\bm{\Psi})^2+\Psi_{12}^2}$.
\end{rem}

This last remark indicates also why it is more difficult to find a similar result in 3D: The problem of finding a recursion relation
for the iterated commutator is highly intertwined with the existence of closed analytical expressions for the eigenvalues of a symmetric matrix.

\subsection{Weak form}
The PDE system we are going to consider, consists of the new constitutive equation \eqref{eqn:2DPsi} encompassed by the Navier-Stokes equations
\begin{gather*}
	\nabla\cdot \bm{u} = 0\\
	\rho (\partial_t + \bm{u}\cdot\nabla) \bm{u} + \nabla p - 2\, \mu_S \nabla\cdot\varepsilon(\bm{u}) - \nabla \cdot \bm{T} = 0\, ,
\end{gather*}
where $\bm{T} = \frac{\mu_P}{\lambda} \left(e^{\bm{\Psi}} -1\right)$ denotes the polymeric stress and $\mu_P, \mu_S$
the polymer and solvent viscosity respectively.
In order to state the weak form of this PDE system we introduce the following spaces for velocity and pressure
\begin{align*}
	V =& C^1([0,T],H^{s-1}(\Omega, \mathbb{R}^2))\cap C^0([0,T],H^s_0(\Omega, \mathbb{R}^2))\\
	Q =& C^0 ([0,T], H^{s-1}(\Omega)\cap L^2_{\int = 0}(\Omega))\, .
\end{align*}
Assuming homogeneous Dirichlet boundary conditions for simplicity, the weak form then reads: \textit{Find a solution
$(\bm{u},p,\bm{\Psi})\in V\times Q\times H_{sym}$ such that for $t\in [0,T]$ and all}
\begin{gather*}
	\bm{v}\in \, H^1_0(\Omega,\mathbb{R}^2)\, ,\\
	q \in \, L^2(\Omega)\, ,\\
	\bm{\Phi} \in L^2(\Omega,\mathbb{R}^{2\times 2})
\end{gather*}
\textit{the following equation is fulfilled}
\begin{align}
\label{eqn:weakform2D}
\begin{split}
	0 =& \quad \rho \left(\bm{v},\partial_t \bm{u} + (\bm{u}\cdot \nabla) \bm{u}\right)_\Omega
				+ \frac{\mu_P}{\lambda} \left(\varepsilon(\bm{v}), e^{\bm{\Psi}} - 1\right)_\Omega
				+ 2\mu_S \left(\varepsilon(\bm{v}),\varepsilon(\bm{u})\right)_\Omega  - \left(\nabla\cdot \bm{v}, p\right)_\Omega\\
			& + \left(q, \nabla \cdot \bm{u}\right)_\Omega \\
			& + \frac{\mu_P}{2\lambda} \left(\bm{\Phi}, \partial_t \bm{\Psi} + (\bm{u}\cdot \nabla) \bm{\Psi}
				+  [\bm{\Psi},\Omega(\bm{u})]
				+ \frac{1}{\lambda} P\left(e^{\bm{\Psi}}\right) e^{-\bm{\Psi}} - 2\varepsilon(\bm{u})\right)_\Omega\\
			& - \frac{\mu_P}{\lambda} \left(\bm{\Phi}, \left(\begin{array}{cc} -\Psi_{12} & \gamma(\bm{\Psi}) \\ \gamma(\bm{\Psi}) & \Psi_{12}\end{array}\right)
			\left[\gamma(\bm{\Psi})\varepsilon(\bm{u})_{12} - \Psi_{12}\gamma(\varepsilon(\bm{u}))\right]  \cdot f(\bm{\Psi})\right)_\Omega\, .
\end{split}
\end{align}
The $L^2$-inner products $(\cdot,\cdot)_\Omega$ are defined by
$(\bm{\Phi},\bm{\Psi})_\Omega = \int_\Omega \bm{\Phi} : \bm{\Psi} =\int_\Omega \mbox{tr} (\bm{\Phi}^T\cdot\bm{\Psi})$
for the tensorial fields and in the usual fashion for scalars and vectors.

The choice of premultiplying the constitutive equation with the factor $\frac{\mu_P}{2\lambda}$ is mainly driven by the consideration that if
$\bm{v},q,\bm{\Phi}$ have the same physical dimension as $\bm{u},p,\bm{\Psi}$, then the residual of the weak form has the physical dimension of power.

\section{Numerical implementation}\label{sec:numimpl}

\subsection{Discretization}
We are going to use \textit{equal-order isoparametric space-time Lagrangian finite elements} in combination with a
\textit{GLS/SUPG stabilization in space} and \textit{DG in time}.

\begin{figure}[t]
\footnotesize
\centering
\import{Fig/}{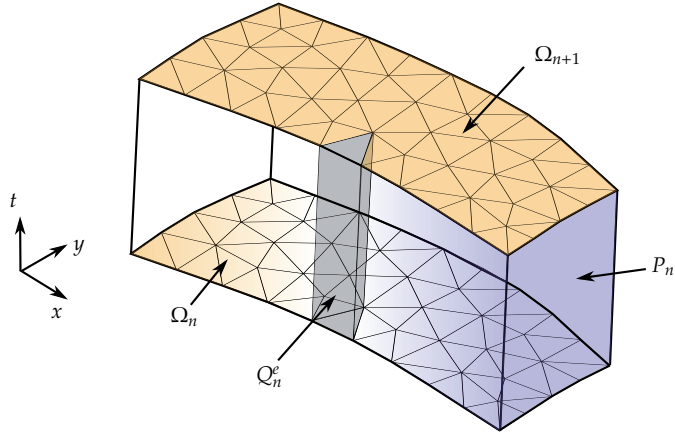}
\caption{Illustration of a space-time slab $Q_n$.}
\label{fig:spacetimeslab}
\end{figure}

By \textit{space-time} approach we mean that considering a division of the time span $[0,T]$ into $N$ intervals
$0=t_0 < t_1< \ldots < t_{N+1} = T$, we will also apply
the same slicing to our manifold $Q$ that describes our computational domain in space-time.\footnote{In this paper we will only use $Q= \Omega \times [0,T]$,
but in general this discretization is also suitable for deforming domains.}
The resulting chunks are then called space-time slabs and denoted by
$\{Q_n\}_{0\leq n \leq N-1}$. These slabs are enclosed in time direction by the two boundaries $\Omega_{n} = \Omega_{t_n}$ and
$\Omega_{n+1} = \Omega_{t_{n+1}}$. The trajectory of the spatial boundary through time is defined as
$P_n = \bigcup_{t\in [t_n,t_{n+1}]} \{t\}\times \Gamma_t$, where
$\Gamma_t$ is the spatial boundary of our domain at a given time $t$.

The constructed space-time slabs $Q_n$ will serve as a basis for our triangulation $\mathcal{T}_{h,n}$, which is given by
$\overline{Q_n} = \bigcup_e Q^e_n$. More precisely, the geometrical
basis of our finite elements is derived from a common reference element, which in our case is a space-time prism $\tilde{Q}$. The function that maps $\tilde{Q}$
onto $Q^e_n$ is called $T_{Q^e_n}$. Employing the \textit{isoparametric principle} and the fact that we use $\mathbb{P}_2$ interpolation in space, as well as
$\mathbb{P}_1$ interpolation in time, we can write the mapping as
\begin{align*}
	T_{Q^e_n}(\bm{\xi}) =& \sum_{i=1}^{n_{en}/2} \left(\begin{array}{c} \bm{x}^e_i \\ t_n\end{array}\right) \phi_i (\bm{\xi})
			+ \sum_{i=n_{en}/2 +1}^{n_{en}} \left(\begin{array}{c} \bm{x}^e_i \\ t_{n+1}\end{array}\right) \phi_i (\bm{\xi})\, ,
\end{align*}
where $n_{en}=\dim \mathbb{P}_2\cdot \dim \mathbb{P}_1$ and $\{\phi_i\}$ is the \textit{Lagrange} basis of $\mathbb{P}_2\otimes \mathbb{P}_1$.
In 2D this amounts to $n_{en} = 12$ nodes per element.

Based on this geometry we can start to construct the interpolation space for our degrees of freedom first on a single space-time slab $Q_n$
\begin{align*}
	V_{h,n} =& \left\{v\in C^0(\overline{Q_n}) \mathrel{}\middle|\mathrel{} \forall Q^e_n \in \mathcal{T}_{h,n} , v\circ T_{Q^e_n} \in \mathbb{P}_2\otimes \mathbb{P}_1\right\}\, ,
\end{align*}
and then on the whole space-time manifold by concatenation
\begin{align*}
	V_{h} =& \left\{ v \in L^2(Q) \mathrel{}\middle|\mathrel{} v|_{[t_n,t_{n+1}]} \in V_{h,n}\right\}\, .
\end{align*}
Note that the interpolation functions are continuous in space, but discontinuous in time.

In order to formulate a well-posed discretized problem we need to restrict our test and trial function spaces to subspaces of $V_{h,n}$. Therefore, let $P_{n,\bm{u}}$
denote the part of the space-time boundary $P_n$ that corresponds to a Dirichlet-boundary condition of the velocity $\bm{u}$, whereas $P_{n,\bm{\Psi}}$
corresponds to the $\bm{\Psi}$-Dirchlet boundary.\footnote{Generalizations, where only a few components of $\bm{u}$
or $\bm{\Psi}$ are prescribed, are obvious.}
The trial function space $\mathcal{S}_{h,n}$ and the test function space $\mathcal{V}_{h,n}$ are then given by
\begin{align*}
	\mathcal{S}_{h,n} =& \left\{ (\bm{u},p,\bm{\Psi}) \in (V_{h,n})^d \times V_{h,n} \times (V_{h,n})^{d\cdot(d+1)/2} \mathrel{}\middle|\mathrel{}
				\bm{u}|_{P_{n,\bm{u}}} = \bm{g}_{\bm{u}}, \bm{\Psi}|_{P_{n,\bm{\Psi}}} = \bm{g}_{\bm{\Psi}}\right\}\\
	\mathcal{V}_{h,n} =& \left\{ (\bm{v},q,\bm{\Phi}) \in (V_{h,n})^d \times V_{h,n} \times (V_{h,n})^{d\cdot(d+1)/2} \mathrel{}\middle|\mathrel{}
				\bm{v}|_{P_{n,\bm{u}}} = \bm{0}, \bm{\Phi}|_{P_{n,\bm{\Psi}}} = \bm{0}\right\}\, .
\end{align*}
Furthermore, we denote with $\mathcal{S}_{h}$ the concatenation of the $\mathcal{S}_{h,n}$ spaces.
The fact that the same space $V_{h,n}$ is used  as a basis for the interpolation of all degrees of freedom is usually referred to as \textit{equal-order interpolation}.

In the following, we will formulate a weak problem on each space-time slab.

\subsection{2D case}
Using the terminology of the last section, the aim of this section is to state a stabilized discrete version of the weak form in Eq.~\eqref{eqn:weakform2D}.
We will first state the weak form and then discuss certain aspects about it:

\textit{Starting with $(\bm{u}^h)_0^- = \bm{u}_0$ and
$(\bm{\Psi}^h)_0^- = \bm{\Psi}_0$ we are seeking $(\bm{u}^h,p^h,\bm{\Psi}^h) \in \mathcal{S}_{h}$
such that for all $n\in\{0,1,\ldots,N-1\}$ and all $(\bm{v}^h,q^h,\bm{\Phi}^h)\in \mathcal{V}_{h,n}$ the following equation is fulfilled}
\begin{align}
\label{eqn:weakform2D-disc}
\begin{split}
	0 =& \quad \int_{Q_n} \bm{v}^h \cdot \rho \left(\partial_t \bm{u}^h + (\bm{u}^h \cdot \nabla) \bm{u}^h\right)
					+ \int_{Q_n} \frac{\mu_P}{\lambda} \varepsilon(\bm{v}^h) : \left(e^{\bm{\Psi}^h} -1\right)\\&
				+ \int_{Q_n} 2\mu_s \varepsilon(\bm{v}^h) : \varepsilon(\bm{u}^h) - \int_{Q_n} (\nabla \cdot \bm{v}^h) \cdot p^h
					+ \int_{\Omega_n} (\bm{v}^h)_n^+ \cdot \rho \left((\bm{u}^h)_n^+ - (\bm{u}^h)_n^-\right) \\&
				+ \sum_e \int_{Q_n^e} \tau_{mom} \frac{1}{\rho} \left(\rho (\bm{u}^h \cdot \nabla) \bm{v}^h + \nabla q^h + \mu_S \Delta \bm{v}^h
						- \frac{\mu_P}{\lambda} \nabla \cdot \bm{\Phi}^h\right) \\&\qquad
							\cdot \left(\rho (\partial_t \bm{u}^h + (\bm{u}^h \cdot \nabla) \bm{u}^h)
						+ \nabla p^h - \mu_S \Delta \bm{u}^h - \frac{\mu_P}{\lambda} \nabla \cdot \left(e^{\bm{\Psi}^h}-1\right)\right)\\&
				+ \int_{Q_n} q^h \, (\nabla\cdot \bm{u}^h) \\&
				+ \int_{Q_n} \frac{\mu_P}{2\lambda} \left(\bm{\Phi}^h + \tau_{cons} (\bm{u}^h \cdot\nabla) \bm{\Phi}^h\right) \\&\qquad
							: \left(\partial_t \bm{\Psi}^h + (\bm{u}^h\cdot \nabla) \bm{\Psi}^h + [\bm{\Psi}^h,\Omega(\bm{u}^h)]
								+ \frac{1}{\lambda} P\left(e^{\bm{\Psi}^h}\right) e^{-\bm{\Psi}^h} - 2\varepsilon(\bm{u}^h)\right)\\&
				- \int_{Q_n} \frac{\mu_P}{\lambda} \left(\bm{\Phi}^h + \tau_{cons} (\bm{u}^h \cdot\nabla) \bm{\Phi}^h\right) \\&\qquad
							: \left(\left(\begin{array}{cc} -\Psi_{12}^h & \gamma(\bm{\Psi}^h) \\ \gamma(\bm{\Psi}^h) & \Psi_{12}^h\end{array}\right)
									\left[\gamma(\bm{\Psi}^h)\varepsilon(\bm{u}^h)_{12} - \Psi_{12}^h\gamma(\varepsilon(\bm{u}^h))\right]  \cdot f(\bm{\Psi}^h)\right)\\&
				+ \int_{\Omega_n} (\bm{\Phi}^h)_n^+ : \frac{\mu_P}{2\lambda} \left((\bm{\Psi}^h)_n^+ - (\bm{\Psi}^h)_n^-\right)\, .
\end{split}
\end{align}
Here, the terms describing discontinuities across space-time slabs are defined by
\begin{align*}
	(\bm{u}^h)_n^\pm =& \lim_{\xi\to 0} \bm{u}^h (t_n \pm \xi)\, .
\end{align*}

The following remarks shall be made about the given weak form:
\begin{itemize}
\item The used stabilization is a mixture of an adjoint\footnote{Also known as the Douglas-Wang method \cite{Douglas1989}.}
Galerkin/Least-Squares (GLS) stabilization \cite{Franca1992a,Franca1992b,Behr1993}
for the momentum equation and a plain Streamline Upwind/Petrov-Galerkin (SUPG) stabilization \cite{Brooks1982} for the
constitutive equation. The stabilization of the continuity equation is omitted, since in practical applications polymer flows usually have small Reynolds numbers.
The corresponding element-specific stabilization parameters depend on the element length $h$, $\Delta t = t_{n+1} -t_n$ and a characteristic velocity
$\bm{u}$ evaluated at the element center:
\begin{align*}
	\tau_{mom} =& \mbox{min}\left(\rho \frac{h^2}{314\, \mu}, \frac{h}{2|\bm{u}|}, \frac{\Delta t}{2}\right)\, ,\\
	\tau_{cons} =& \mbox{min}\left(\left(2\frac{|\bm{u}|}{h} + \lambda^{-1}\right)^{-1}, \frac{\Delta t}{2}\right)\, .
\end{align*}
The main motivation for choosing $\nabla \cdot \bm{\Phi}$ instead of $\nabla \cdot e^{\bm{\Phi}}$ in the GLS term is the intention for the given weak form
to satisfy a discrete version of so-called free energy estimates. The latter has been shown in \cite{Boyaval2009} to be essential to prove the
existence of discrete global-in-time solutions for homogeneous boundary conditions.

\item Currently, the implementation only supports the Oldroyd-B model, leading to the substitution
\begin{align*}
	P(e^{\bm{\Psi}^h})e^{-\bm{\Psi}^h} = 1 - e^{-\bm{\Psi}^h}\, .
\end{align*}

\item For the evaluation of the matrix exponential functions we use a combination of the Padé approximants $R_{6,6}$ and a scaling/squaring approach
(cf. \cite{Moler2003}).
The latter means, when trying to calculate $e^{\bm{X}}$ for $\bm{X}\in\mathbb{R}^{d\times d}$,
we first choose $j\in \mathbb{N}$ large enough such that $||\bm{X}||_\infty < 2^j$.
In a second step we compute $\bm{A} = R_{6,6} (\bm{X}\cdot 2^{-j})$ as an approximation of $\exp(\bm{X}\cdot 2^{-j})$ and then finally perform
$j$ in-place squarings of the matrix $\bm{A}$.

\item A point that has not been present in the original weak form in Eq.~\eqref{eqn:weakform2D}
is the evaluation of the derivative of the matrix exponential function.
The discretized version in Eq.~\eqref{eqn:weakform2D-disc} now includes ${\nabla \cdot \left(e^{\bm{\Psi}^h}-1\right)}$ in the GLS term,
which will be dealt with with the help of Corollary \ref{cor:expderv2D}
\begin{align*}
	\nabla \cdot \left(e^{\bm{\Psi}^h}-1\right) =& \sum_{i=1}^2 \hat{\bm{e}}_i^T \cdot e^{\bm{\Psi}^h/2} \bigg[ \partial_i \bm{\Psi}^h
					+ \left(\begin{array}{cc} - \Psi^h_{12} & \gamma(\bm{\Psi}^h)\\ \gamma(\bm{\Psi}^h) & \Psi^h_{12}\end{array}\right) \\&
				\qquad\qquad\qquad\cdot \left[\gamma(\bm{\Psi}^h)\partial_i \Psi^h_{12} - \Psi^h_{12} \partial_i \gamma(\bm{\Psi}^h)\right]
					\cdot g(\bm{\Psi}^h)\bigg] e^{\bm{\Psi}^h/2}\, .
\end{align*}
For the definition of the scalar function $g(\bm{\Psi}^h)$ we refer to Eq.~\eqref{eqn:definitiong}.
\end{itemize}

\subsection{Linearization}
So far, the weak form given in Eq.~\eqref{eqn:weakform2D-disc} is still non-linear. In our implementation, we employed the Newton-Raphson method in order to
linearize the problem. Now consider the weak form in Eq.~\eqref{eqn:weakform2D-disc} to be given in the abstract form: we are searching for a
$\bm{z}^h = (\bm{u}^h, p^h, \bm{\Psi}^h)\in\mathcal{S}_{h}$ such that for each time step $n \in \{0,\ldots, N-1\}$ and all
$\bm{w}^h = (\bm{v}^h, q^h, \bm{\Phi}^h)\in\mathcal{V}_{h,n}$ we have
\begin{align*}
	a_n(\bm{w}^h, \bm{z}^h) =& 0\, .
\end{align*}
Applying Newton's algorithm to each time step separately then reads: starting from some initial guess $\bm{z}^h_{n,0}$ we are searching for
$\delta\bm{z}^h_{n,i} = (\delta \bm{u}^h_{n,i}, \delta p^h_{n,i}, \delta\bm{\Psi}^h_{n,i}) \in \mathcal{V}_{h,n}$ such that
\begin{align}\label{eqn:newtonraphson}
	\left. D a_n (\bm{w}^h, \cdot )\right|_{\bm{z}^h_{n,i}}\, \delta \bm{z}^h_{n,i} =& \, - a_n(\bm{w}^h, \bm{z}^h_{n,i})\quad \forall \bm{w}^h \in \mathcal{V}_{h,n}\, .
\end{align}
The resulting $\delta \bm{z}^h_{n,i}$ is used afterwards to update $\bm{z}^h_{n,i+1} = \bm{z}^h_{n,i} + \delta \bm{z}^h_{n,i}$, which is then
reinserted into the algorithm until the residual of Eq.~\eqref{eqn:weakform2D-disc} becomes small enough.
Here, the directional variational derivative is, as usual, defined as
\begin{align*}
	\left. D a_n (\bm{w}^h, \cdot )\right|_{\bm{z}^h_{n,i}}\, \delta \bm{z}^h_{n,i} =&
			\left.\frac{d}{d\xi}\right|_{\xi=0} a_n(\bm{w}^h, \bm{z}^h_{n,i} + \xi \cdot \delta \bm{z}^h_{n,i})\, .
\end{align*}

For the sake of brevity we will not state the full variational derivative but rather only parts of it. E.g., the material derivative in the momentum equation
becomes
\begin{align*}
	\left. D a_n (\bm{w}^h, \cdot )\right|_{\bm{z}^h_{n,i}}\, \delta \bm{z}^h_{n,i} =& \ldots +
			\int_{Q_n} \bm{v}^h \cdot \rho \left(\partial_t \delta \bm{u}^h_{n,i} + (\delta \bm{u}^h_{n,i} \cdot \nabla) \bm{u}^h_{n,i}
				+ (\bm{u}^h_{n,i} \cdot \nabla) \delta \bm{u}^h_{n,i}\right)+\ldots\, ,
\end{align*}
whereas the velocity DG term is given by
\begin{align*}
	\left. D a_n (\bm{w}^h, \cdot )\right|_{\bm{z}^h_{n,i}}\, \delta \bm{z}^h_{n,i} =& \ldots +
			\int_{\Omega_n} (\bm{v}^h)_n^+ \cdot \rho \left((\delta \bm{u}^h_{n,i})_n^+ - (\bm{u}^h)_n^-\right)+\ldots\, .
\end{align*}
The terms involving $e^{\bm{\Psi}^h}$ are handled using Corollary \ref{cor:expderv2D}. Therefore, the $\bm{\Psi}^h$ contribution
to the momentum equation leads to
\begin{align*}
\begin{split}
	\left. D a_n (\bm{w}^h, \cdot )\right|_{\bm{z}^h_{n,i}}\, \delta \bm{z}^h_{n,i} =& \ldots +
			\int_{Q_n} \frac{\mu_P}{\lambda} \varepsilon(\bm{v}^h) : e^{\bm{\Psi}^h_{n,i}/2} \bigg[\delta \bm{\Psi}^h_{n,i}
				+ \left(\begin{array}{cc} - (\bm{\Psi}^h_{n,i})_{12} & \gamma(\bm{\Psi}^h_{n,i})\\
														\gamma(\bm{\Psi}^h_{n,i}) & (\bm{\Psi}^h_{n,i})_{12}\end{array}\right) \\&
				\qquad\qquad\cdot \left[\gamma(\bm{\Psi}^h_{n,i})(\delta \bm{\Psi}^h_{n,i})_{12}
														- (\bm{\Psi}^h_{n,i})_{12} \gamma(\delta \bm{\Psi}^h_{n,i})\right]
					\cdot g(\bm{\Psi}^h_{n,i})\bigg] e^{\bm{\Psi}^h_{n,i}/2}+\ldots\, .
\end{split}
\end{align*}
It shall also be noted that our implementation does not include all contributions to the variational derivative. More specifically, the derivatives
of the GLS/SUPG-stabilization terms
with respect to the velocity field have been omitted. The reasoning behind this is that these terms are only meant to stabilize the linear equation system.

Further on, by choosing a basis of $\mathcal{V}_{h,n}$ one can reformulate \eqref{eqn:newtonraphson} into a linear equation system in a usual fashion,
which is then accessible to a linear solver like GMRES. In our case, we use an inherently-parallel version of FGMRES \cite{Saad1993}.
The latter is combined with an ILUT preconditioner \cite{Saad1994} that is parallelised using an Additive Schwarz approach with zero overlap.

\section{Confined cylinder benchmark}\label{sec:benchmark}
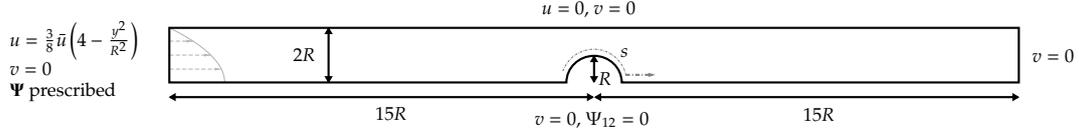
\begin{figure}[t]
\scriptsize
\centering
\definecolor{cb5b5b5}{RGB}{181,181,181}
\definecolor{cb6b6b6}{RGB}{182,182,182}
\definecolor{cb4b4b4}{RGB}{180,180,180}
\definecolor{c808080}{RGB}{128,128,128}

\begin{tikzpicture}[y=0.80pt, x=0.8pt,yscale=-1, inner sep=0pt, outer sep=0pt]
\begin{scope}
  \path[draw=cb5b5b5,dash pattern=on 1.60pt off 0.80pt,line join=miter,line
    cap=butt,miter limit=4.00,line width=0.400pt] (239.9284,953.1596) --
    (264.1242,953.1596);

  \path[draw=cb5b5b5,dash pattern=on 1.60pt off 0.80pt,line join=miter,line
    cap=butt,miter limit=4.00,line width=0.400pt] (239.9284,946.5580) --
    (259.1959,946.5580);

  \path[draw=cb6b6b6,dash pattern=on 1.60pt off 0.80pt,line join=miter,line
    cap=butt,miter limit=4.00,line width=0.400pt] (239.9284,939.9565) --
    (250.8785,939.9565);

  \path[draw=cb4b4b4,line join=round,line cap=butt,line width=0.400pt]
    (239.9284,933.6850) .. controls (240.5234,933.9810) and (241.1082,934.2769) ..
    (241.6827,934.5728) .. controls (242.2572,934.8687) and (242.8214,935.1647) ..
    (243.3753,935.4606) .. controls (243.9293,935.7565) and (244.4730,936.0524) ..
    (245.0065,936.3484) .. controls (245.5399,936.6444) and (246.0631,936.9402) ..
    (246.5761,937.2362) .. controls (247.0890,937.5321) and (247.5916,937.8280) ..
    (248.0841,938.1239) .. controls (248.5765,938.4199) and (249.0586,938.7158) ..
    (249.5305,939.0118) .. controls (250.0024,939.3077) and (250.4641,939.6036) ..
    (250.9154,939.8995) .. controls (251.3668,940.1955) and (251.8079,940.4914) ..
    (252.2388,940.7873) .. controls (252.6697,941.0833) and (253.0903,941.3792) ..
    (253.5006,941.6751) .. controls (253.9110,941.9710) and (254.3110,942.2670) ..
    (254.7009,942.5629) .. controls (255.0907,942.8589) and (255.4703,943.1548) ..
    (255.8396,943.4507) .. controls (256.2089,943.7467) and (256.5679,944.0426) ..
    (256.9168,944.3385) .. controls (257.2656,944.6344) and (257.6041,944.9304) ..
    (257.9324,945.2264) .. controls (258.2606,945.5222) and (258.5787,945.8182) ..
    (258.8864,946.1141) .. controls (259.1942,946.4101) and (259.4917,946.7059) ..
    (259.7789,947.0019) .. controls (260.0662,947.2979) and (260.3431,947.5938) ..
    (260.6099,947.8897) .. controls (260.8766,948.1857) and (261.1330,948.4816) ..
    (261.3793,948.7775) .. controls (261.6255,949.0734) and (261.8614,949.3694) ..
    (262.0871,949.6653) .. controls (262.3128,949.9612) and (262.5282,950.2572) ..
    (262.7334,950.5531) .. controls (262.9386,950.8490) and (263.1335,951.1449) ..
    (263.3182,951.4410) .. controls (263.5028,951.7368) and (263.6772,952.0327) ..
    (263.8413,952.3288) .. controls (264.0055,952.6247) and (264.1593,952.9205) ..
    (264.3030,953.2165) .. controls (264.4466,953.5125) and (264.5800,953.8084) ..
    (264.7031,954.1042) .. controls (264.8262,954.4003) and (264.9390,954.6962) ..
    (265.0416,954.9921) .. controls (265.1442,955.2880) and (265.2365,955.5840) ..
    (265.3186,955.8799) .. controls (265.4007,956.1758) and (265.4725,956.4718) ..
    (265.5340,956.7677) .. controls (265.5956,957.0636) and (265.6469,957.3596) ..
    (265.6879,957.6555) .. controls (265.7290,957.9514) and (265.7597,958.2473) ..
    (265.7803,958.5433) .. controls (265.8008,958.8392) and (265.8111,959.1351) ..
    (265.8111,959.4311);

  \path[draw=black,line width=0.800pt] (239.9284,933.6935) -- (239.9284,959.4395)
    -- (425.7519,959.4395) .. controls (425.7519,952.5122) and (431.5459,946.8966)
    .. (438.6932,946.8966) .. controls (445.8404,946.8966) and (451.6344,952.5122)
    .. (451.6344,959.4395) -- (637.4579,959.4395) -- (637.4579,933.6935) --
    (239.9284,933.6935) -- cycle;

  \path[draw=black,line join=round,line cap=butt,line width=0.800pt]
    (314.4373,959.1010) -- (314.4373,934.0151);

  \path[draw=black,line join=miter,line cap=butt,line width=0.800pt]
    (240.2603,966.3628) -- (438.0295,966.3628);

  \path[fill=black,even odd rule] (316.4283,956.4604) -- (312.4464,956.4604) --
    (314.4373,959.7612) -- cycle;

  \path[fill=black,even odd rule] (312.4464,936.6557) -- (316.4283,936.6557) --
    (314.4373,933.3549) -- cycle;

  \path[fill=black,even odd rule] (242.9149,968.3433) -- (242.9149,964.3823) --
    (239.5966,966.3628) -- cycle;

  \path[fill=black,even odd rule] (435.3749,964.3823) -- (435.3749,968.3433) --
    (438.6932,966.3628) -- cycle;

  \begin{scope}[shift={(199.09654,0)},shift={(0,0)}]
    \path[draw=black,line join=miter,line cap=butt,line width=0.800pt]
      (240.2603,966.3628) -- (438.0295,966.3628);

  \end{scope}
  \begin{scope}[shift={(199.09654,0)},shift={(0,0)}]
    \path[fill=black,even odd rule] (242.9149,968.3433) -- (242.9149,964.3823) --
      (239.5966,966.3628) -- cycle;

  \end{scope}
  \begin{scope}[shift={(199.09654,0)},shift={(0,0)}]
    \path[fill=black,even odd rule] (435.3749,964.3823) -- (435.3749,968.3433) --
      (438.6932,966.3628) -- cycle;

  \end{scope}
  \path[xscale=1.003,yscale=0.997,fill=black] (297.08798,952.37714) node[above
    right,align=left] (text3931) {$2R$};

  \path[xscale=1.003,yscale=0.997,fill=black] (335.30603,980.31116) node[above
    right,align=left] (text3935) {$15R$};

  \path[xscale=1.003,yscale=0.997,fill=black] (534.87665,980.46387) node[above
    right,align=left] (text3939) {$15R$};

  \path[draw=black,line join=miter,line cap=butt,line width=0.800pt]
    (438.6932,959.1010) -- (438.6932,947.2182);

  \path[fill=black,even odd rule] (440.6841,956.4604) -- (436.7022,956.4604) --
    (438.6931,959.7612) -- cycle;

  \path[fill=black,even odd rule] (436.7022,949.8588) -- (440.6841,949.8588) --
    (438.6932,946.5580) -- cycle;

  \path[xscale=1.003,yscale=0.997,fill=black] (439.55783,963.58789) node[above
    right,align=left] (text4012) {$R$};

  \path[fill=cb6b6b6,even odd rule] (247.8923,939.2963) -- (247.8923,940.6166) --
    (251.2106,939.9565) -- cycle;

  \path[fill=cb6b6b6,even odd rule] (256.1880,945.8979) -- (256.1880,947.2182) --
    (259.5063,946.5580) -- cycle;

  \path[fill=cb6b6b6,even odd rule] (261.1654,952.4995) -- (261.1654,953.8198) --
    (264.4837,953.1596) -- cycle;

  \path[draw=c808080,dash pattern=on 1.60pt off 0.80pt on 0.40pt off 0.80pt,line
    join=miter,line cap=butt,miter limit=4.00,line width=0.800pt]
    (453.4724,955.8673) -- (466.0819,955.8673);

  \path[fill=c808080,even odd rule] (464.0909,954.8770) -- (464.0909,956.8575) --
    (467.4092,955.8673) -- cycle;

  \path[cm={{1.93568,0.0,0.0,1.76492,(472.51337,-824.12797)}},draw=c808080,dash
    pattern=on 0.87pt off 0.43pt on 0.22pt off 0.43pt,line join=round,line
    cap=butt,miter limit=4.00,line width=0.433pt]
    (-25.0083,1007.5259)arc(200.000:350.000:7.958984 and 8.887);

  \path[xscale=1.003,yscale=0.997,fill=black] (641.91241,953.04492) node[above
    right,align=left] (text3298) {$v=0$};

  \path[xscale=1.003,yscale=0.997,fill=black] (164.83913,933.51263) node[below
    right,align=left] (text3321) {$u =
    \frac{3}{8}\bar{u}\left(4-\frac{y^2}{R^2}\right)$\\$v = 0$\\$\bm{\Psi}$
    prescribed};

  \path[xscale=1.003,yscale=0.997,fill=black] (409.72345,983.72736) node[above
    right,align=left] (text3130) {$v = 0$, $\Psi_{12}=0$};

  \path[xscale=1.003,yscale=0.997,fill=black] (413.01999,931.44708) node[above
    right,align=left] (text3171) {$u=0$, $v=0$};

  \path[xscale=1.003,yscale=0.997,fill=black] (449.96594,950.20898) node[above
    right,align=left] (text3384) {$s$};

\end{scope}

\end{tikzpicture}
\caption{Sketch of the used geometry including the boundary conditions.}
\label{fig:geom_confinedcyl}
\end{figure}

We are going to benchmark our implementation with the so-called confined cylinder problem, for which a great variety of results is already
accessible in the literature \cite{Liu1998,Sun1999,Fan1999,Hulsen2005,Coronado2007,Afonso2009,Claus2013}. More specifically, in this benchmark problem one considers a cylinder
that is confined between two walls with a ratio of the channel width to cylinder diameter of $2$, as can be seen in Fig. \ref{fig:geom_confinedcyl}.
Furthermore, in order to reduce the numerical workload we restrict ourselves to the symmetric solutions.
Assuming now a steady Poiseuille flow at the inlet, the aim of this section is to measure
several performance quantities of the steady stream flowing around the cylinder, e.g., the drag on the cylinder or the polymeric stress in
the wake of the cylinder.

\subsection{Setup}

The boundary conditions are analogous to the ones found in the appropriate literature. No-slip boundary conditions are applied
at the channel wall and the cylinder surface.
On the centerline, we incorporated the symmetry by a slip boundary condition, meaning that we set $v=0$ and $\Psi_{12}=0$, whereas at
the outflow only $v=0$ was enforced. For the boundary conditions on the inflow, we have chosen to prescribe a fully-developed Poiseuille solution
of the Oldroyd-B model. The latter is well-known for the velocity degrees, but for the log-conf field one has to derive the terms in a three-step approach
by first diagonalizing the known expressions for the conformation tensor, then applying the logarithm on the eigenvalues and at last collapsing the
eigendecomposition again. The result is subsequently given by
\begin{align*}
\begin{split}
\begin{aligned}[c]
	\Psi_{11}^{in} =& \frac{1}{2}\left(p - \frac{q}{\sqrt{1+\left(\lambda \partial_y u\right)^{-2}}}\right)\\
	\Psi_{12}^{in} =& -\frac{1}{2} \frac{q}{o}\\
	\Psi_{22}^{in} =& \frac{1}{2} \left(\frac{p}{\left(\lambda \partial_y u\right)^2} + \frac{q}{o}\right)
\end{aligned}
\quad\mbox{where}\quad
\begin{aligned}[c]
	o =& \sqrt{\left(\lambda \partial_y u\right)^2\cdot\left(1+\left(\lambda \partial_y u\right)^2\right)}\\
	p=& \ln\left(1+\left(\lambda \partial_y u\right)^2\right)\\
	q =& \ln\left(1+2\left(\left(\lambda \partial_y u\right)^2-o\right)\right)\, .
\end{aligned}
\end{split}
\end{align*}

\begin{figure}[t]
\centering
\includegraphics[width=\textwidth]{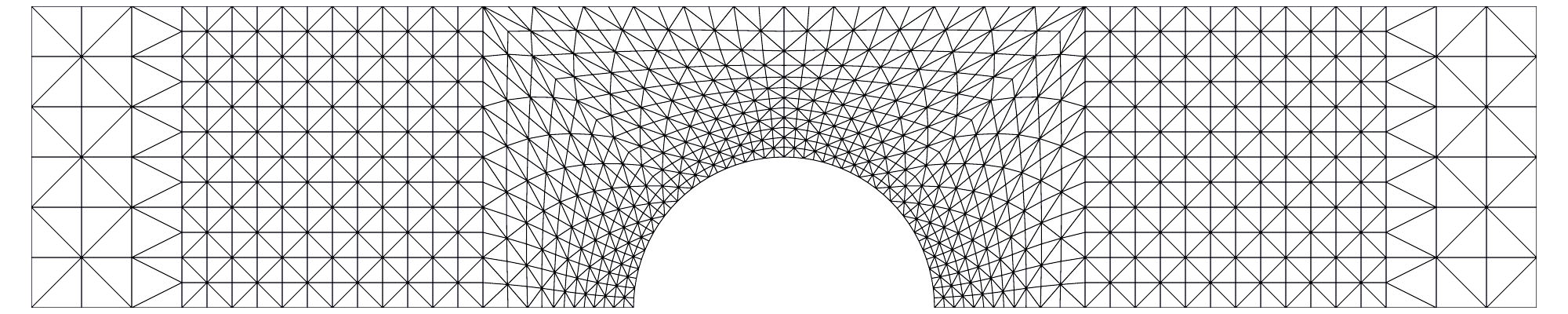}
\caption{Excerpt of the Mesh M1.}
\label{fig:mesh}
\end{figure}

We use structured triangular meshes with equal spacing on the cylinder, as can be seen in Fig. \ref{fig:mesh}. All finer meshes
were obtained by doubling the number of elements on the cylinder and adjusting the surrounding mesh accordingly.
Further mesh properties can be found in Tab. \ref{tab:meshproperties}.

\begin{table}[ht]
\footnotesize
\centering
\begin{tabular}{lccccc}
\toprule
 & M1 & M2 & M3 & M4 & M5 \\
\midrule
Number of elements on the half-cylinder & 48 & 96 & 192 & 384 & 768 \\
Total number of nodes & 5353 & 20785 & 81889 & 326595 & 1298307 \\
Total number of elements & 2532 & 10104 & 40368 & 162144 & 646848 \\
Krylov-space dimension & 200 & 200 & 200 & 200 & 400 \\
ILUT maximal fill-in & 200 & 200 & 200 & 200 & 200 \\
ILUT threshold & $10^{-4}$ & $10^{-4}$ & $10^{-4}$ & $10^{-4}$ & $10^{-4}$ \\
Number of cores & 16 & 32 & 64 & 128 & 256 \\
\bottomrule
\end{tabular}
\caption{Mesh and solver properties.}
\label{tab:meshproperties}
\end{table}

As in most of the literature, we also examine the creeping flow limit with vanishing Reynolds number, which in our case was enforced
by omitting the advective terms in the momentum equation part of Eq.~\eqref{eqn:weakform2D-disc},
as well as rendering the SUPG-term $\frac{h}{2|\bm{u}|}$ in the stabilization parameter $\tau_{mom}$ ineffective.
The steady state equation is implemented in a similar fashion: The corresponding terms in the main equation and the stabilization are neglected.
The latter distinguishes us from part of the literature, where steady-state simulations are not applied, but rather instationary simulations are
conducted until the quantities
of interest have settled to a constant value \cite{Hulsen2005,Claus2013}. Despite being superior to the instationary approach in terms of simulation time,
the stationary approach puts more pressure on the Newton-Raphson solver, which has to be alleviated by a consecutive ramping up of the Weissenberg number.
For completeness, we should mention that the Weissenberg number is in our notation defined as
\begin{align*}
	Wi =& \frac{\lambda \bar{u}}{R}\, ,
\end{align*}
where $\bar{u}$ denotes the average inflow velocity and $R$ the cylinder radius.
Furthermore, as in the literature, we use a viscosity ratio of $\beta=\mu_S/\mu = 0.59$ for the benchmark.

\subsection{Results}

Basis of the comparison is the computation of the drag on the cylinder for different Weissenberg numbers.
For better comparability with existing results we introduce the dimensionless drag coefficient
\begin{align*}
	K &= \frac{2}{\mu \bar{u}} \int_{\Gamma_{HC}} \hat{\bm{e}}_x^T \left[-p^h + 2 \mu_S \varepsilon (\bm{u}^h)
			+ \frac{\mu_P}{\lambda}\left(e^{\bm{\Psi}^h}-1\right)\right] \bm{n}\, ,
\end{align*}
where $\Gamma_{HC}$ is the one-dimensional manifold describing the half-cylinder surface and $\bm{n}$ the corresponding
unit normal.

\begin{table}[h]
\footnotesize
\centering
\begin{tabular}{ccccccccc}
\toprule
\multirow{2}[3]{*}{$Wi$} & \multicolumn{8}{c}{$K$}\\
\cmidrule(lr){2-9}
 & M1 & M2 & M3 & M4 & M5 & \cite{Hulsen2005} & \cite{Claus2013} & \cite{Fan1999} \\
\midrule
0.1 & 130.3706 & 130.3613 & 130.3620 & 130.3625 & 130.3626 & 130.363 & 130.364 & 130.36 \\
0.2 & 126.6609 & 126.6288 & 126.6254 & 126.6252 & 126.6252 & 126.626 & 126.626 & 126.62 \\
0.3 & 123.2622 & 123.2008 & 123.1922 & 123.1913 & 123.1912 & 123.193 & 123.192 & 123.19 \\
0.4 & 120.6953 & 120.6080 & 120.5931 & 120.5914 & 120.5912 & 120.596 & 120.593 & 120.59 \\
0.5 & 118.9615 & 118.8505 & 118.8291 & 118.8263 & 118.8260 & 118.836 & 118.826 & 118.83 \\
0.6 & 117.9542 & 117.8048 & 117.7798 & 117.7756 & 117.7752 & 117.775 & 117.776 & 117.78 \\
0.7 & 117.5430 & 117.3416 & 117.3193 & 117.3155 & 117.3157 & 117.315 & 117.316 & 117.32 \\
0.75& 117.5108 & 117.2940 & 117.2747 & 117.2733 & 117.2752 &  &  &  \\
0.8 & 117.5639 & 117.3539 & 117.3365 & 117.3395 & 117.3454 & 117.373 & 117.368 & 117.36 \\
0.85& 117.6809 & 117.5116 & 117.4925 & 117.5016 & 117.5138 &  &  &  \\
0.88& 117.7743 & 117.6495 & 117.6265 & 117.6402 & 117.6567 &  &  &  \\
0.89& 117.8085 & 117.7022 & 117.6774 & 117.6927 & 117.7107 &  &  &  \\
0.9 & 117.8442 & 117.7584 & 117.7312 & 117.7483 & 117.7678 & 117.787 & 117.812 & 117.80 \\
\bottomrule
\end{tabular}
\caption{Results for the drag coefficient $K$ compared to results from literature. The values from literature are always the finest mesh results. In the case of
\cite{Fan1999}, the MIX0 results are utilized.}
\label{tab:drag}
\end{table}

\begin{figure}[htb]
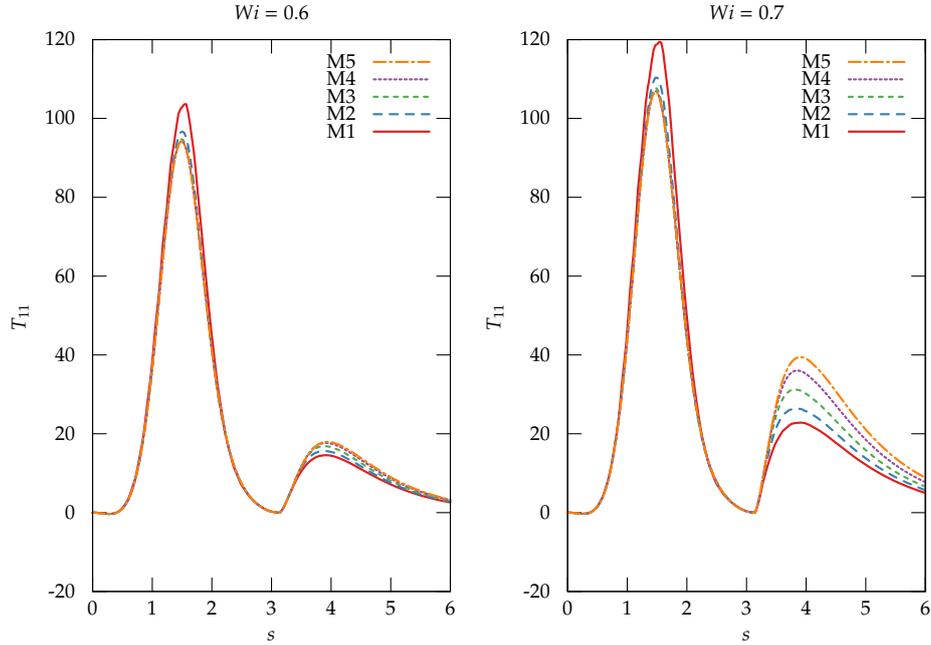

\centering
\include{Fig/T11WakePlot}
\caption{The $T_{11}$ component of the polymeric stress along and in the wake of the cylinder for different Weissenberg numbers.}
\label{fig:T11WakePlot}
\end{figure}

\begin{figure}[htb]
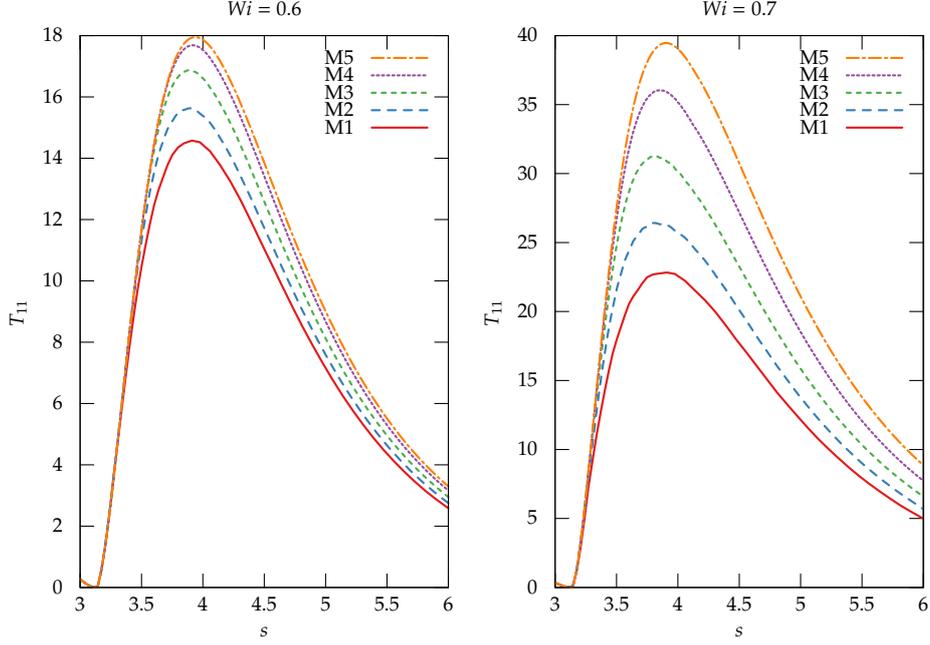

\centering
\include{Fig/T11WakePlotDetail}
\caption{Detail plot of $T_{11}$ in the wake of the cylinder.}
\label{fig:T11WakePlotDetail}
\end{figure}

Tab. \ref{tab:drag} reveals that up to $Wi\leq 0.7$, our results agree quite well with the existing results in literature.
Above $Wi=0.7$ the divergence of the results increases across the different publications.

Another point that becomes directly apparent while looking at Tab. \ref{tab:drag} is that the step sizes between two consecutive $Wi$ calculations
had to be reduced with increasing Weissenberg number: starting with $\Delta Wi = 0.1$ and ending with $\Delta Wi = 0.01$.
The underlying difficulties, that seem to be symptomatic for large jumps in the Weissenberg number, manifested themselves most of the time in the lack
of convergence of the linear solver in the second or third Newton-Raphson step. Hence, one gains the impression
that the first Newton-Raphson step drives the solution into a direction where the following linear system is ill-conditioned. It shall also be noted that
this problem becomes more severe with increasing mesh size, although it is not clear what this is to be attributed to: On the one hand
there is an inherent increase in the condition number due to mesh refinement and on the other hand one
can partially alleviate the problem by increasing the Krylov-space dimension of the GMRES.
This interplay of two competing effects makes it particularly difficult to determine a limiting Weissenberg
number for a sufficiently fine mesh.

Furthermore, looking at Fig. \ref{fig:T11WakePlot} and \ref{fig:T11WakePlotDetail}
one also notices that we -- in accordance with the literature -- cannot claim to have reached mesh convergence
of the polymeric stress in the wake of the cylinder for Weissenberg numbers greater than $0.6$. As had already been concluded in previous investigations of
the benchmark, this raises concerns about whether at higher Weissenberg numbers the simulation results are still physical.

\section{Conclusion and discussion}
In this paper we have proposed a new constitutive equation of log-conf type that can be used as a drop-in replacement for a variety of existing
constitutive models,
e.g., the Oldroyd-B model or the Giesekus model. In contrast to the existing work of Fattal and Kupferman \cite{Fattal2004} we do not need to introduce
an a-priori iterative procedure which applies an eigenvalue-type decomposition to the strain tensor, but rather obtain in combination with the Navier-Stokes
equations a self-contained fully-implicit system of PDEs. Especially this knowledge of the analytic structure of the constitutive equation is what allows us
to then attain a fast Newton-Raphson algorithm in our numerical implementation of the model.
First numerical tests have shown that the performance of this new method is at least comparable to
the existing log-conf methods, but, at least in the case of the investigated Oldroyd-B model, also suffers from the same weaknesses
as the original log-conf method;
namely breakdown of the simulations already at low Weissenberg numbers.
In that regard we share the opinion of the authors of \cite{Hulsen2005}, that this might be due to the Oldroyd-B model allowing infinte extension of the
polymer under finite elongation rates. Further investigations have to be conducted to see whether other constitutive models
reduce these problems and how our method compares to other implementations.

In addition to being a useful tool for numerical simulations of constitutive equations, we also hope that our new formulation will be fruitful for the
discussion of the Weissenberg problem in general, since in addition to the numerical analysis it might give a new perspective on the problem
from the purely analytical point of view. Since this is a rather intricate topic on its own, it is out of scope for this paper.

\section{Acknowledgements}
The authors gratefully acknowledge support from the German Research Foundation (DFG) grant
”Computation of Die Swell Behind a Complex Profile Extrusion Die Using a Stabilized Finite Element Method for Various Thermoplastic Polymers”
and the DFG program GSC 111 (AICES Graduate School).
The computations were conducted on computing clusters provided by the Jülich Aachen Research Alliance (JARA). In addition to that the authors also
want to thank the On-Line Encyclopedia of Integer Sequences (OEIS) \cite{OEIS} for being a helpful resource in identifying the Bernoulli numbers as coefficients
in Eq.~\eqref{eqn:logconf}.
Special thanks goes to Prof. Arnold Reusken for commenting on the early versions of the manuscript.

\appendix

\section{Properties of the matrix exponential mapping}\label{sec:exponentialmapping}
\subsection{General considerations}
There are several lemmas originating from the Lie group and Banach algebra theory that help us derive the
log-conformation formulation. In the following section we will require
\begin{itemize}
\item a commutative Banach algebra $\mathcal{H}$,
\item a Banach space $\mathcal{H}'$,
\item a set of continuous differential operators $\partial_i: \mathcal{H}\to \mathcal{H}'$,
\item a continuous embedding $\mathcal{H} \subseteq \mathcal{H}'$,
\item and that the multiplication on $\mathcal{H}$ shall be extensible to a continuous multiplication $\cdot : \mathcal{H}' \times \mathcal{H} \to \mathcal{H}'$.
\end{itemize}
From this setting we will derive another Banach algebra $H = \mathcal{H}^{d\times d}$ and Banach space $H' = \mathcal{H}'^{d\times d}$, as well as
symmetrized variants thereof
\begin{align*}
	H_{sym} &= \{\bm{X}\in H | \bm{X}^T = \bm{X}\}\\
	H_{sym}' &= \{\bm{X}\in H' | \bm{X}^T = \bm{X}\}\, .
\end{align*}
The space $H_{sym}$ will for example serve us as the space containing $\bm{\Psi}$, and $H_{sym}'$ as the space in which the constitutive
equation is formulated.

\begin{lem}[Hadamard]
Let $\bm{X}$ be an element of $H$ and $\bm{Y}$ an element of $H'$, then the following identity holds
\begin{align*}
	e^{\bm{X}}\bm{Y}e^{-\bm{X}} =& \bm{Y} + [\bm{X},\bm{Y}] + \frac{1}{2!}[\bm{X},[\bm{X},\bm{Y}]] + \ldots\\
		=& \sum_{n=0}^\infty \frac{1}{n!} \{\bm{X},\bm{Y}\}_{n}\, ,
\end{align*}
where $[\bm{X},\bm{Y}]=\bm{X}\bm{Y}-\bm{Y}\bm{X}$ denotes the usual commutator and we recursively define
$\{\bm{X},\bm{Y}\}_n = [\bm{X},\{\bm{X},\bm{Y}\}_{n-1}]$ with $\{\bm{X},\bm{Y}\}_0 = \bm{Y}$.
\label{lem:hadamard}
\end{lem}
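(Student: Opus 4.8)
The plan is to prove the Hadamard lemma by the standard trick of introducing a one-parameter family and showing it satisfies a linear ODE whose solution is the claimed series. Concretely, I would define $F(t) = e^{t\bm{X}}\bm{Y}e^{-t\bm{X}}$ for $t\in[0,1]$ (or $t\in\mathbb{R}$), where the exponentials are understood via the analytic functional calculus on the Banach algebra $H$, and $\bm{Y}\in H'$ is multiplied using the continuous multiplications $H\times H'\to H'$ and $H'\times H\to H'$ guaranteed by the hypotheses. First I would check that $t\mapsto e^{t\bm{X}}$ is differentiable with $\tfrac{d}{dt}e^{t\bm{X}} = \bm{X}e^{t\bm{X}} = e^{t\bm{X}}\bm{X}$ (a routine consequence of the power series converging absolutely in the Banach algebra $H$), and hence that $F$ is differentiable as an $H'$-valued function with
\begin{align*}
	F'(t) &= \bm{X}e^{t\bm{X}}\bm{Y}e^{-t\bm{X}} - e^{t\bm{X}}\bm{Y}e^{-t\bm{X}}\bm{X} = [\bm{X}, F(t)]\, .
\end{align*}

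Next I would solve this linear ODE. Writing $\mathrm{ad}_{\bm{X}}(\bm{Z}) = [\bm{X},\bm{Z}]$, the equation reads $F'(t) = \mathrm{ad}_{\bm{X}}(F(t))$ with $F(0) = \bm{Y}$, so formally $F(t) = e^{t\,\mathrm{ad}_{\bm{X}}}(\bm{Y}) = \sum_{n=0}^\infty \tfrac{t^n}{n!}\{\bm{X},\bm{Y}\}_n$. To make this rigorous I would verify that $\mathrm{ad}_{\bm{X}}: H' \to H'$ is a bounded linear operator — this follows from the continuity of the two multiplications, with $\|\mathrm{ad}_{\bm{X}}(\bm{Z})\|_{H'} \le 2\|\bm{X}\|_H\|\bm{Z}\|_{H'}$ — so that the operator exponential $e^{t\,\mathrm{ad}_{\bm{X}}}$ is a well-defined bounded operator on $H'$ and the series $\sum_n \tfrac{t^n}{n!}\{\bm{X},\bm{Y}\}_n$ converges absolutely in $H'$ for every $t$. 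One then checks directly that $G(t) := \sum_n \tfrac{t^n}{n!}\{\bm{X},\bm{Y}\}_n$ satisfies the same ODE and initial condition (differentiating term by term, which is justified by the uniform convergence on compact $t$-intervals), and uniqueness of solutions to a linear ODE with bounded generator on a Banach space gives $F \equiv G$. Evaluating at $t=1$ yields the claim.

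An alternative, slightly more hands-on route avoids invoking the ODE uniqueness theorem: expand $e^{t\bm{X}}$ and $e^{-t\bm{X}}$ as their defining series, multiply out using the Cauchy product (legitimate because all series converge absolutely in the Banach algebra and the relevant multiplications are continuous), and collect powers of $t$; the coefficient of $t^n/n!$ is then $\sum_{k=0}^n \binom{n}{k}\bm{X}^k\bm{Y}(-\bm{X})^{n-k}$, which one identifies with $\{\bm{X},\bm{Y}\}_n$ by an induction on $n$ using the binomial-type identity for iterated commutators. I would probably present the ODE argument as the main proof and mention the direct computation as a remark.

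The main obstacle is not any deep mathematics but the bookkeeping needed to justify manipulations at the level of generality the paper insists on: $\bm{X}$ lives in the algebra $H$ while $\bm{Y}$ lives only in the (possibly larger) module $H'$, so one must be careful that every product written down is one of the two permitted continuous multiplications $H\times H' \to H'$, $H' \times H \to H'$ (never $H'\times H'$), and that term-by-term differentiation and the Cauchy product rearrangement are each covered by absolute convergence in the appropriate norm. Once it is observed that $\mathrm{ad}_{\bm{X}}$ is a bounded operator on $H'$ and that left/right multiplication by $e^{t\bm{X}}$ are bounded operators on $H'$ depending analytically on $t$, everything else is standard. I expect this to be a short proof.
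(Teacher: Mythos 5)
Your proposal is correct and takes essentially the same route as the paper: both introduce the one-parameter family $F(t)=e^{t\bm{X}}\bm{Y}e^{-t\bm{X}}$ and identify it with the series $\sum_{n}\frac{t^n}{n!}\{\bm{X},\bm{Y}\}_n$ evaluated at $t=1$. The paper phrases this as expanding the holomorphic function $F$ in its Taylor series about $t=0$, while you justify the same identification via the linear ODE $F'(t)=[\bm{X},F(t)]$ with the bounded generator $\mathrm{ad}_{\bm{X}}$ on $H'$ — a presentational difference only, with your write-up supplying the boundedness and convergence bookkeeping the paper leaves implicit.
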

\begin{proof}
At first one has to recognize that $e^{t\bm{X}}\bm{Y}e^{-t\bm{X}}$ is a holomorphic function of $t\in\mathbb{C}$.
Now, as in the case of the matrix algebra, the assertion is a consequence of evaluating the Taylor series of $e^{t\bm{X}}\bm{Y}e^{-t\bm{X}}$
around $t=0$ at $t=1$.
\end{proof}
For later use, a chain-rule type relation for the exponential mapping is required. A first version can be found in the following lemma, of which a slightly different
variant can be traced back to \cite{Karplus1948}.

\begin{lem}[Wilcox]
Let $\bm{X}$ be an element of $H$, then the following identity holds
\begin{align*}
	\partial_i e^{\bm{X}(x)} =& \int_0^1 e^{(1-\alpha)\bm{X}(x)} \left(\partial_i \bm{X}(x)\right) e^{\alpha\bm{X}(x)}\, d\alpha\, .
\end{align*}
\label{lem:wilcox}
\end{lem}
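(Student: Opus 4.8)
\textbf{Proof proposal for Lemma \ref{lem:wilcox} (Wilcox).}

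The plan is to reduce the statement to a one-parameter ODE in the Banach algebra $H$, solve it explicitly, and then evaluate at parameter value $1$. First I would introduce, for fixed $x$ and a real (then complex) parameter $t$, the $H$-valued function $\bm{F}(t) = e^{t\bm{X}(x)}$, which is holomorphic in $t$ by the analytic functional calculus on the Banach algebra $H$ (cf. \cite[Theorem 10.27]{Rudin}); this is the same observation already used in the proof of Lemma \ref{lem:hadamard}. Differentiating the defining exponential series termwise—justified by absolute convergence in $H$—gives $\partial_t \bm{F}(t) = \bm{X}(x)\,\bm{F}(t) = \bm{F}(t)\,\bm{X}(x)$, since $\bm{X}(x)$ commutes with each power of itself.

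Next I would apply the differential operator $\partial_i$ to $\bm{F}(t)$. The key step is to produce a first-order linear ODE in $t$ for the $H'$-valued function $\bm{G}(t) := \partial_i\bm{F}(t) = \partial_i e^{t\bm{X}(x)}$. Differentiating the series for $e^{t\bm{X}}$ term by term and using the product rule (valid because $\partial_i : \mathcal{H}\to\mathcal{H}'$ is continuous and the multiplication extends continuously to $\mathcal{H}'\times\mathcal{H}\to\mathcal{H}'$, so the componentwise Leibniz rule holds in $H'$), one finds
\begin{align*}
\partial_t \bm{G}(t) = \bm{X}(x)\,\bm{G}(t) + \left(\partial_i \bm{X}(x)\right) e^{t\bm{X}(x)}\, .
\end{align*}
This is an inhomogeneous linear ODE with the constant coefficient $\bm{X}(x)$ on the left and the initial condition $\bm{G}(0) = \partial_i(\mathbf{1}) = 0$. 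Solving it by the variation-of-constants formula—multiplying through by the integrating factor $e^{-t\bm{X}(x)}$, noting $\partial_t\!\left(e^{-t\bm{X}}\bm{G}\right) = e^{-t\bm{X}}\left(\partial_i\bm{X}\right)e^{t\bm{X}}$, and integrating from $0$ to $t$—yields
\begin{align*}
\bm{G}(t) = \int_0^t e^{(t-s)\bm{X}(x)} \left(\partial_i \bm{X}(x)\right) e^{s\bm{X}(x)}\, ds\, .
\end{align*}
Setting $t=1$ and substituting $s = \alpha$ gives the claimed identity; the integrand is continuous (indeed holomorphic) in the integration variable with values in $H'$, so the Bochner integral is well defined.

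The main obstacle is not conceptual but bookkeeping in the Banach-space setting: one must check that termwise differentiation of the exponential series in both $t$ and $\partial_i$ is legitimate, i.e. that the partial sums converge in the appropriate norms and that $\partial_i$ (being continuous and linear, but unbounded only in the classical sense—here it is assumed continuous $\mathcal{H}\to\mathcal{H}'$) commutes with the convergent series. All of this follows from the standing assumptions listed at the start of \ref{sec:exponentialmapping}: $\mathcal{H}$ is a commutative Banach algebra, $\partial_i$ is continuous into $\mathcal{H}'$, and the multiplication extends continuously to $\mathcal{H}'\times\mathcal{H}$. Once these continuity facts are invoked, the ODE manipulation is identical to the finite-dimensional matrix case, and the proof closes.
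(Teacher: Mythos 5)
Your argument is correct: the ODE $\partial_t\bm{G}(t)=\bm{X}\bm{G}(t)+(\partial_i\bm{X})e^{t\bm{X}}$ follows from termwise differentiation exactly as you claim, and the integrating-factor/variation-of-constants step, together with $\bm{G}(0)=0$, gives the stated integral formula at $t=1$; the continuity assumptions on $\partial_i$ and on the extended multiplication $\mathcal{H}'\times\mathcal{H}\to\mathcal{H}'$ (with commutativity of $\mathcal{H}$ making left multiplication by elements of $H$ equally available) are precisely what is needed to justify the interchange of series, $\partial_i$, $\partial_t$ and the Leibniz rule. The paper itself gives no argument here but simply defers to the reference \cite{Wilcox1966}, and your Duhamel-type parameter-differentiation derivation is essentially the classical proof contained in that reference, transplanted to the Banach-algebra setting of \ref{sec:exponentialmapping}, so your proposal fills in the details rather than diverging from the intended route.
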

\begin{proof}
We refer to \cite{Wilcox1966} for the details of the proof.
\end{proof}

An important corollary to these lemmata is
\begin{cor}
Let $\bm{X} \in H$, then
\begin{align}
\begin{split}
	\left(\partial_i e^{\bm{X}(x)}\right)e^{-\bm{X}(x)} =& \partial_i \bm{X} + \frac{1}{2!} [\bm{X},\partial_i \bm{X}] + \ldots\\
		=& \sum_{n=0}^\infty \frac{1}{(n+1)!} \{\bm{X}(x),\partial_i \bm{X}(x)\}_n
\end{split}
\label{eqn:derivative1}
\end{align}
holds, as well as
\begin{align}
	e^{-\bm{X}(x)/2}\left(\partial_i e^{\bm{X}(x)}\right)e^{-\bm{X}(x)/2} =& \sum_{n=0}^\infty \frac{1}{(2n+1)!}
			\frac{1}{2^{2n}} \{\bm{X}(x),\partial_i \bm{X}(x)\}_{2n}\, .
\label{eqn:derivative2}
\end{align}
\label{cor:wilcox}
\end{cor}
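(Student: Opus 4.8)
The plan is to derive both identities of Corollary \ref{cor:wilcox} directly from the Wilcox Lemma (Lemma \ref{lem:wilcox}) combined with the Hadamard Lemma (Lemma \ref{lem:hadamard}); the only real work is a change of variables in the integral and a termwise integration of the Hadamard series, so I will not grind through the elementary integrals beyond indicating their values.

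For \eqref{eqn:derivative1} I would start from Lemma \ref{lem:wilcox}, multiply on the right by $e^{-\bm{X}}$, and use $e^{\alpha\bm{X}}e^{-\bm{X}} = e^{-(1-\alpha)\bm{X}}$ to obtain
\begin{align*}
	\left(\partial_i e^{\bm{X}(x)}\right)e^{-\bm{X}(x)} = \int_0^1 e^{(1-\alpha)\bm{X}(x)}\left(\partial_i \bm{X}(x)\right)e^{-(1-\alpha)\bm{X}(x)}\,d\alpha .
\end{align*}
Applying Lemma \ref{lem:hadamard} with $\bm{X}$ replaced by $(1-\alpha)\bm{X}$ turns the integrand into $\sum_{n\ge 0}\frac{(1-\alpha)^n}{n!}\{\bm{X},\partial_i\bm{X}\}_n$, and interchanging the sum with the integral together with $\int_0^1(1-\alpha)^n\,d\alpha = \frac{1}{n+1}$ yields exactly the coefficient $\frac{1}{(n+1)!}$ in \eqref{eqn:derivative1}. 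For \eqref{eqn:derivative2} the same idea applies after conjugating Lemma \ref{lem:wilcox} by $e^{-\bm{X}/2}$ on both sides; using $e^{-\bm{X}/2}e^{(1-\alpha)\bm{X}} = e^{(\frac12-\alpha)\bm{X}}$ and $e^{\alpha\bm{X}}e^{-\bm{X}/2} = e^{-(\frac12-\alpha)\bm{X}}$ and then substituting $\beta = \tfrac12-\alpha$ gives
\begin{align*}
	e^{-\bm{X}(x)/2}\left(\partial_i e^{\bm{X}(x)}\right)e^{-\bm{X}(x)/2} = \int_{-1/2}^{1/2} e^{\beta\bm{X}(x)}\left(\partial_i \bm{X}(x)\right)e^{-\beta\bm{X}(x)}\,d\beta .
\end{align*}
Expanding the integrand by Lemma \ref{lem:hadamard} and integrating termwise, the odd powers of $\beta$ integrate to zero over the symmetric interval, while $\int_{-1/2}^{1/2}\beta^{2n}\,d\beta = \frac{1}{(2n+1)\,2^{2n}}$, which combines with $\frac{1}{(2n)!}$ to produce the coefficient $\frac{1}{(2n+1)!}\frac{1}{2^{2n}}$ in front of $\{\bm{X},\partial_i\bm{X}\}_{2n}$.

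The one point demanding care — and the step I expect to be the main obstacle — is justifying the exchange of the infinite sum with the integral in the Banach-space setting. This follows from absolute and uniform convergence: using the bound $\|\{\bm{X},\partial_i\bm{X}\}_n\|_{H'}\le 2^n\|\bm{X}\|_H^n\|\partial_i\bm{X}\|_{H'}$ already invoked in Remark \ref{rem:bernoullinumbers}, together with $|1-\alpha|\le 1$ (resp. $|\beta|\le\tfrac12$), the $n$-th term of the Hadamard series is dominated on the interval of integration by $\frac{(2\|\bm{X}\|_H)^n}{n!}\|\partial_i\bm{X}\|_{H'}$, which is summable independently of $\|\bm{X}\|_H$ because of the $1/n!$ factor; hence, unlike the Bernoulli series, no smallness assumption on $\bm{X}$ is required here. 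Finally, the continuity of the two-sided multiplication $H\times H'\to H'$ and $H'\times H\to H'$, which makes every conjugate $e^{\bm{A}}\bm{Y}e^{\bm{B}}$ a well-defined element of $H'$ and lets the Bochner integrals above make sense, is guaranteed by the standing assumptions on $\mathcal{H}$ and $\mathcal{H}'$ (commutativity of $\mathcal{H}$ and extensibility of the product to $\mathcal{H}'\times\mathcal{H}$).
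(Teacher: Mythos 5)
Your proposal is correct and follows essentially the same route as the paper: combining Lemma \ref{lem:wilcox} with Lemma \ref{lem:hadamard}, moving the exponential factors into the integrand, and integrating the Hadamard series termwise, with the odd-power integrals vanishing over the symmetric interval and $\int_0^1(1-\alpha)^n\,d\alpha=\frac{1}{n+1}$, resp. $\int_{-1/2}^{1/2}\beta^{2n}\,d\beta=\frac{1}{(2n+1)2^{2n}}$, producing the stated coefficients. The only difference is that you spell out the (unconditional, thanks to the $1/n!$ factor) justification for exchanging sum and integral, which the paper leaves implicit.
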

\begin{proof}
We will restrict ourselves to the proof of the second equality since the first one is similar. Lemma \ref{lem:hadamard} and \ref{lem:wilcox} combined give
\begin{align*}
	e^{-\bm{X}(x)/2}\left(\partial_i e^{\bm{X}(x)}\right)e^{-\bm{X}(x)/2} =&
			\int_0^1 e^{(\frac{1}{2}-\alpha)\bm{X}(x)} \left(\partial_i \bm{X}(x)\right) e^{-(\frac{1}{2}-\alpha)\bm{X}(x)}\, d\alpha\\
		=& \sum_{j=0}^\infty \frac{1}{j!} \{\bm{X}(x),\partial_i \bm{X}(x)\}_j \int_0^1 \left(\frac{1}{2}-\alpha\right)^{j}\, d\alpha
\end{align*}
The integral is clearly zero for odd $j$ and for even $j=2n$ we obtain
\begin{align*}
	\int_0^1 \left(\frac{1}{2}-\alpha\right)^{2n}\, d\alpha =& \int_{-1/2}^{1/2} x^{2n}\, dx = \frac{1}{2n+1}\cdot \frac{1}{2^{2n}}\, .
\end{align*}
Hence, using $n$ as a summation index yields the desired result.
\end{proof}

\begin{rem}
In the last proof one can also substitute $\partial_i \bm{X}$ by an arbitrary $\bm{Y} \in H'$ and see that the following identities hold
\begin{align}
	\int_0^1 e^{(1-\alpha)\bm{X}} \bm{Y} e^{\alpha\bm{X}}\, d\alpha
		=& \sum_{n=0}^\infty \frac{1}{(n+1)!} \{\bm{X},\bm{Y}\}_n e^{\bm{X}} \label{eqn:hadamard-int1}\\
		=& e^{\bm{X}/2} \sum_{n=0}^\infty \frac{1}{(2n+1)!} \frac{1}{2^{2n}} \{\bm{X},\bm{Y}\}_{2n} e^{\bm{X}/2}\, . \label{eqn:hadamard-int2}
\end{align}
\end{rem}

\subsection{2D case}
In this section we will show how to substitute the series in Corollary \ref{cor:wilcox} by analytical functions for elements of $H_{sym}$ in the case of $d=2$.
\begin{lem}\label{lem:2Dsimp}
For $\bm{A}\in H_{sym},\bm{B}\in H_{sym}'$ and $d=2,n\geq 1$, we can show
\begin{align}\label{eqn:2Dsimp}
	\{\bm{A},\bm{B}\}_{2n} =& 2^{2n} \left(\begin{array}{cc} -A_{12} & \gamma(\bm{A}) \\ \gamma(\bm{A}) & A_{12}\end{array}\right)
			\left[\gamma(\bm{A})B_{12} - A_{12}\gamma(\bm{B})\right] \left(\gamma(\bm{A})^2+A_{12}^2\right)^{n-1}\, ,
\end{align}
where
\begin{align*}
	\gamma(\bm{C}) =& \frac{1}{2} (C_{11} - C_{22})\quad \forall \bm{C}\in H\, .
\end{align*}
\end{lem}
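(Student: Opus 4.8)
The plan is to prove the identity \eqref{eqn:2Dsimp} by induction on $n$, exploiting the special two–dimensional structure of the iterated commutator. First I would set up notation: for $\bm{A}\in H_{sym}$ write $\bm{A} = \tfrac{1}{2}(\operatorname{tr}\bm{A})\,\mathbb{1} + \widehat{\bm{A}}$, where $\widehat{\bm{A}} = \left(\begin{smallmatrix} \gamma(\bm{A}) & A_{12} \\ A_{12} & -\gamma(\bm{A})\end{smallmatrix}\right)$ is the traceless symmetric part. Since the identity matrix commutes with everything, $\{\bm{A},\bm{B}\}_k = \{\widehat{\bm{A}},\bm{B}\}_k$ for all $k\ge 1$, so it suffices to work with traceless symmetric $\bm{A}$. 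Likewise only the traceless symmetric part of $\bm{B}$ (entering through $\gamma(\bm{B})$ and $B_{12}$) will matter in the end, but I would keep $\bm{B}$ general until the computation forces the reduction.

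The key observation is that for a $2\times 2$ traceless symmetric matrix $\widehat{\bm{A}}$ one has $\widehat{\bm{A}}^2 = (\gamma(\bm{A})^2 + A_{12}^2)\,\mathbb{1} =: \kappa^2\,\mathbb{1}$, i.e. $\widehat{\bm{A}}$ squares to a scalar. This is the engine of the recursion. The single–commutator map $\operatorname{ad}_{\widehat{\bm{A}}}(\bm{X}) = [\widehat{\bm{A}},\bm{X}]$ therefore satisfies $\operatorname{ad}_{\widehat{\bm{A}}}^2(\bm{X}) = \widehat{\bm{A}}^2\bm{X} - 2\widehat{\bm{A}}\bm{X}\widehat{\bm{A}} + \bm{X}\widehat{\bm{A}}^2 = 2\kappa^2\bm{X} - 2\widehat{\bm{A}}\bm{X}\widehat{\bm{A}}$. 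I would compute $\{\widehat{\bm{A}},\bm{B}\}_2 = \operatorname{ad}_{\widehat{\bm{A}}}^2(\bm{B})$ directly in components — this is the base case $n=1$ of \eqref{eqn:2Dsimp}, and one checks it equals $4\left(\begin{smallmatrix} -A_{12} & \gamma(\bm{A}) \\ \gamma(\bm{A}) & A_{12}\end{smallmatrix}\right)\big[\gamma(\bm{A})B_{12} - A_{12}\gamma(\bm{B})\big]$ by a short explicit calculation (the scalar bracket is exactly the component of $\bm{B}$ "transverse" to $\widehat{\bm{A}}$ in the $2$–dimensional space of traceless symmetric matrices). Denote the matrix prefactor $\bm{M}(\bm{A}) := \left(\begin{smallmatrix} -A_{12} & \gamma(\bm{A}) \\ \gamma(\bm{A}) & A_{12}\end{smallmatrix}\right)$; note $\bm{M}(\bm{A})$ is itself traceless symmetric with $\bm{M}(\bm{A})^2 = \kappa^2\,\mathbb{1}$.

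For the inductive step, assuming \eqref{eqn:2Dsimp} holds for $n$, I would apply $\operatorname{ad}_{\widehat{\bm{A}}}^2$ to both sides to pass from $2n$ to $2n+2$. The scalar factors $\big[\gamma(\bm{A})B_{12}-A_{12}\gamma(\bm{B})\big]$ and $(\gamma(\bm{A})^2+A_{12}^2)^{n-1}$ and $2^{2n}$ are inert, so the whole step reduces to showing $\operatorname{ad}_{\widehat{\bm{A}}}^2\big(\bm{M}(\bm{A})\big) = 4\kappa^2\,\bm{M}(\bm{A})$. Using $\operatorname{ad}_{\widehat{\bm{A}}}^2(\bm{X}) = 2\kappa^2\bm{X} - 2\widehat{\bm{A}}\bm{X}\widehat{\bm{A}}$, this is equivalent to $\widehat{\bm{A}}\,\bm{M}(\bm{A})\,\widehat{\bm{A}} = -\kappa^2\,\bm{M}(\bm{A})$, which is a one–line $2\times 2$ matrix identity: $\widehat{\bm{A}}$ and $\bm{M}(\bm{A})$ anticommute (both are traceless symmetric and $\bm{M}(\bm{A})$ is, up to sign conventions, the "rotation by $90^\circ$" partner of $\widehat{\bm{A}}$), so $\widehat{\bm{A}}\bm{M}(\bm{A})\widehat{\bm{A}} = -\bm{M}(\bm{A})\widehat{\bm{A}}^2 = -\kappa^2\bm{M}(\bm{A})$. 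Feeding this back in yields the factor $2^{2n+2}$ and $(\gamma(\bm{A})^2+A_{12}^2)^n$, completing the induction.

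\textbf{Main obstacle.}
There is no deep obstacle; the content is entirely the two elementary $2\times 2$ facts "$\widehat{\bm{A}}^2$ is scalar" and "$\widehat{\bm{A}}$ anticommutes with $\bm{M}(\bm{A})$". The one point demanding care is bookkeeping: the lemma is stated for $\bm{A}\in H_{sym}$ (not merely traceless), for $\bm{B}\in H'_{sym}$, and over a Banach algebra rather than over $\mathbb{R}$, so I would state at the outset that every manipulation used — adding/subtracting scalar multiples of $\mathbb{1}$, squaring $\widehat{\bm{A}}$, the component identities defining $\bm{M}(\bm{A})$ — is purely algebraic and valid in any commutative Banach algebra $\mathcal{H}$, hence legitimate entrywise in $H$ and $H'$. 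I would also verify the base case $n=1$ by brute-force component computation to pin down the sign conventions in $\bm{M}(\bm{A})$ once and for all, since the rest of the proof is sign-sensitive. Getting the reduction $\{\bm{A},\bm{B}\}_{2n}$ to depend on $\bm{B}$ only through $\gamma(\bm{B})$ and $B_{12}$ is automatic: the first commutator $[\widehat{\bm{A}},\bm{B}]$ already produces a traceless matrix, and a second commutator lands in the span of $\widehat{\bm{A}}$ and $\bm{M}(\bm{A})$, so no separate argument is needed.
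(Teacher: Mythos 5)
Correct, and essentially the paper's own approach: like the paper, you reduce to the traceless part of $\bm{A}$, verify the $n=1$ case by a direct $2\times 2$ computation, and then induct on $n$. The only minor difference lies in how the inductive step is executed — the paper applies the induction hypothesis to the pair $(\bm{A},\{\bm{A},\bm{B}\}_2)$ and recomputes $\gamma(\{\bm{A},\bm{B}\}_2)$ and $(\{\bm{A},\bm{B}\}_2)_{12}$, whereas you apply two further commutators to the formula and use $\widehat{\bm{A}}^{2}=\left(\gamma(\bm{A})^{2}+A_{12}^{2}\right)\bm{1}$ together with the anticommutation of $\widehat{\bm{A}}$ with the prefactor matrix; both come down to the same $2\times 2$ algebra.
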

\begin{proof}
Without loss of generality, we can assume $\bm{A},\bm{B}$ being traceless, which amounts to
\begin{align*}
	\bm{A} =& \left(\begin{array}{cc} \gamma(\bm{A}) & A_{12} \\ A_{12} & - \gamma(\bm{A})\end{array}\right)
\end{align*}
and $\bm{B}$ given analogously.
We will now prove the formula using induction, starting with $n = 1$, in which case an algebraic calculation yields
\begin{align*}
	\{\bm{A},\bm{B}\}_{2} =& 4 \left(\begin{array}{cc} - A_{12} & \gamma(\bm{A}) \\ \gamma(\bm{A}) & A_{12}\end{array}\right)
			\left[\gamma(\bm{A})B_{12} - A_{12}\gamma(\bm{B})\right]\, .
\end{align*}
Assuming that Eq.~\eqref{eqn:2Dsimp} holds for $n-1$ we can now reiterate
\begin{align*}
	\{\bm{A},\bm{B}\}_{2n} =&\{\bm{A},\{\bm{A},\bm{B}\}_{2}\}_{2n-2}\\
		=& 2^{2n-2} \left(\begin{array}{cc} - A_{12} & \gamma(\bm{A}) \\ \gamma(\bm{A}) & A_{12}\end{array}\right)
			\left[\gamma(\bm{A})(\{\bm{A},\bm{B}\}_{2})_{12} - A_{12}\gamma(\{\bm{A},\bm{B}\}_{2})\right] \left(\gamma(\bm{A})^2+A_{12}^2\right)^{n-2}\,
\end{align*}
where
\begin{align*}
	\gamma(\{\bm{A},\bm{B}\}_{2}) =& - 4 A_{12} \left[\gamma(\bm{A})B_{12} - A_{12}\gamma(\bm{B})\right]\\
	(\{\bm{A},\bm{B}\}_{2})_{12} =& 4 \gamma(\bm{A}) \left[\gamma(\bm{A})B_{12} - A_{12}\gamma(\bm{B})\right]\, ,
\end{align*}
such that Eq.~\eqref{eqn:2Dsimp} also holds for $n$.
\end{proof}

Combining Corollary \ref{cor:wilcox} and Lemma \ref{lem:2Dsimp} then yields

\begin{cor}\label{cor:expderv2D}
For $\bm{X}\in H_{sym}$ and $d=2$ we can express the derivative of the exponential mapping as
\begin{align*}
	&e^{-\bm{X}(x)/2}\left(\partial_i e^{\bm{X}(x)}\right)e^{-\bm{X}(x)/2}\\&\quad =  \partial_i \bm{X}(x) + \left(\begin{array}{cc} - \bm{X}_{12} & \gamma(\bm{X})\\
				\gamma(\bm{X}) & \bm{X}_{12}\end{array}\right)\left[\gamma(\bm{X})\partial_i \bm{X}_{12} - \bm{X}_{12} \partial_i \gamma(\bm{X})\right]
				\cdot g(\bm{X})\, ,
\end{align*}
with
\begin{align}
	g(\bm{X}) =& \left(\gamma(\bm{X})^2+\bm{X}_{12}^2\right)^{-3/2}\cdot\left(\sinh\left(\sqrt{\gamma(\bm{X})^2+\bm{X}_{12}^2}\right) -
				\sqrt{\gamma(\bm{X})^2+\bm{X}_{12}^2}\right)\, .
	\label{eqn:definitiong}
\end{align}
\end{cor}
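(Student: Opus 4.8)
The plan is to combine Corollary~\ref{cor:wilcox}, specifically Eq.~\eqref{eqn:derivative2}, with the closed-form expression for the iterated commutators given by Lemma~\ref{lem:2Dsimp}. Starting from
\begin{align*}
	e^{-\bm{X}/2}\left(\partial_i e^{\bm{X}}\right)e^{-\bm{X}/2} =& \sum_{n=0}^\infty \frac{1}{(2n+1)!}\frac{1}{2^{2n}} \{\bm{X},\partial_i\bm{X}\}_{2n}\, ,
\end{align*}
I would first split off the $n=0$ term, which contributes $\{\bm{X},\partial_i\bm{X}\}_0 = \partial_i\bm{X}$, since Lemma~\ref{lem:2Dsimp} only applies for $n\geq 1$. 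For the remaining sum I substitute $\bm{A}=\bm{X}$, $\bm{B}=\partial_i\bm{X}$ into Eq.~\eqref{eqn:2Dsimp}, which gives
\begin{align*}
	\{\bm{X},\partial_i\bm{X}\}_{2n} =& 2^{2n} \left(\begin{array}{cc} -\bm{X}_{12} & \gamma(\bm{X}) \\ \gamma(\bm{X}) & \bm{X}_{12}\end{array}\right)
			\left[\gamma(\bm{X})\partial_i\bm{X}_{12} - \bm{X}_{12}\gamma(\partial_i\bm{X})\right] \left(\gamma(\bm{X})^2+\bm{X}_{12}^2\right)^{n-1}\, .
\end{align*}
Here I should observe that $\gamma(\partial_i\bm{X}) = \partial_i\gamma(\bm{X})$ because $\gamma$ is linear and $\partial_i$ is linear, so the matrix-valued prefactor matches exactly what appears in the statement of Corollary~\ref{cor:expderv2D}.

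The key point is that the $2^{2n}$ factor from Lemma~\ref{lem:2Dsimp} cancels the $2^{-2n}$ in Eq.~\eqref{eqn:derivative2}, and since the matrix prefactor and the bracket $[\gamma(\bm{X})\partial_i\bm{X}_{12}-\bm{X}_{12}\partial_i\gamma(\bm{X})]$ are independent of $n$, they factor out of the sum. What remains inside is a scalar series:
\begin{align*}
	g(\bm{X}) =& \sum_{n=1}^\infty \frac{1}{(2n+1)!}\left(\gamma(\bm{X})^2+\bm{X}_{12}^2\right)^{n-1}\, .
\end{align*}
Writing $r = \sqrt{\gamma(\bm{X})^2+\bm{X}_{12}^2}$, this is $\frac{1}{r^2}\sum_{n=1}^\infty \frac{r^{2n}}{(2n+1)!} = \frac{1}{r^3}\sum_{n=1}^\infty \frac{r^{2n+1}}{(2n+1)!}$. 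Since $\sinh r = \sum_{k=0}^\infty \frac{r^{2k+1}}{(2k+1)!}$, the tail starting at $n=1$ is $\sinh r - r$, which gives exactly $g(\bm{X}) = r^{-3}(\sinh r - r)$, i.e., Eq.~\eqref{eqn:definitiong}. I should also remark that $g$ extends analytically across $r=0$ (the apparent singularity is removable, since $\sinh r - r = O(r^3)$), so the formula makes sense pointwise and, by the analytic functional calculus, as an identity in $H'$.

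The main obstacle, such as it is, is bookkeeping rather than mathematics: one must be careful that the index shift in the scalar series is done correctly (the power is $r^{2n-2}$, not $r^{2n}$, so the sum representing $\sinh r - r$ needs the $r^{-3}$ normalization, not $r^{-2}$), and that the absolute convergence needed to pull the $n$-independent factors out of the series is justified — but this follows since $\sum \frac{r^{2n}}{(2n+1)!}$ converges for all $r$, hence (in the Banach-algebra setting) the series converges absolutely in $H'$ for any $\bm{X}\in H_{sym}$. Unlike Theorem~\ref{thm:firstmainthm}, there is no bound $\|\bm{X}\|<\pi$ here, precisely because $\sinh$ is entire; this is worth stating explicitly. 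A minor subtlety is verifying that the matrix $\left(\begin{smallmatrix} -\bm{X}_{12} & \gamma(\bm{X})\\ \gamma(\bm{X}) & \bm{X}_{12}\end{smallmatrix}\right)$ appearing here is antisymmetric-looking but actually symmetric and traceless, consistent with the left-hand side $e^{-\bm{X}/2}(\partial_i e^{\bm{X}})e^{-\bm{X}/2}$ being symmetric; this is automatic and needs no separate argument.
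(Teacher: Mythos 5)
Your proposal is correct and follows the paper's own proof exactly: insert Lemma~\ref{lem:2Dsimp} into Eq.~\eqref{eqn:derivative2}, cancel the $2^{2n}$ against the $2^{-2n}$, split off the $n=0$ term, and identify the remaining scalar series with $\sinh$ via its Taylor expansion, yielding $g(\bm{X})$ as in Eq.~\eqref{eqn:definitiong}. The extra remarks on the removable singularity at $r=0$ and the absence of a $\|\bm{X}\|<\pi$ bound are sound but not needed beyond what the paper states.
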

\begin{proof}
Inserting the result of Lemma \ref{lem:2Dsimp} in Eq.~\eqref{eqn:derivative2} yields
\begin{align*}
	&e^{-\bm{X}(x)/2}\left(\partial_i e^{\bm{X}(x)}\right)e^{-\bm{X}(x)/2}\\&\quad =  \partial_i \bm{X}(x) + \left(\begin{array}{cc} - \bm{X}_{12} & \gamma(\bm{X})\\
				\gamma(\bm{X}) & \bm{X}_{12}\end{array}\right)\left[\gamma(\bm{X})\partial_i \bm{X}_{12} - \bm{X}_{12} \partial_i \gamma(\bm{X})\right]
				\\&\quad\quad\cdot \sum_{n=1}^\infty \frac{1}{(2n+1)!} \left(\gamma(\bm{X})^2+\bm{X}_{12}^2\right)^{n-1}\, .
\end{align*}
Using the Taylor series $\sinh(x) = \sum_{n=0}^\infty \frac{1}{(2n+1)!} x^{2n+1}$, the assertion of the corollary follows immediately.
\end{proof}

\bibliographystyle{elsarticle-num}
\bibliography{references}


\end{document}